\documentclass[11pt]{article}

\usepackage{amsmath}
\usepackage{amssymb}
\usepackage{amsthm}
\usepackage{multirow}
\usepackage{color}

\oddsidemargin  0pt \evensidemargin 0pt \marginparwidth 40pt
\marginparsep 10pt \topmargin -10pt \headsep 10pt \textheight
8.7in \textwidth 6.7in \textheight 7.8375in

\newtheorem{theorem}{Theorem}[section]
\newtheorem{remark}[theorem]{Remark}
\newtheorem{definition}{Definition}[section]
\newtheorem{lemma}[theorem]{Lemma}
\newtheorem{example}[theorem]{Example}

\newtheorem{corollary}[theorem]{Corollary}
\newtheorem{proposition}[theorem]{Proposition}

\newcommand{\ZZ}{\mathbb{Z}}
\newcommand{\CC}{\mathbb{C}}
\newcommand{\F}{\mathbb{F}}

\newcommand{\GR}{GR(4,n)}
\newcommand{\Om}{\Omega}
\newcommand{\x}{x+2\sqrt{f(x)}}
\newcommand{\y}{y+2\sqrt{f(y)}}

\newcommand{\X}{\chi}
\newcommand{\E}{\mathcal{E}}
\newcommand{\R}{\mathcal{R}}
\newcommand{\Sc}{\mathcal{S}}
\newcommand{\Tr}{\textup{Tr}}
\newcommand{\N}{\textup{N}}
\newcommand{\ord}{\text{ord}\,}
\newcommand{\Aut}{{\text{Aut}}}

\def\abs#1{{\vert{#1}\vert}}
\newcommand{\Iff}{if and only if }

\title{New Pseudo-Planar Binomials in Characteristic Two\\ and Related Schemes}

\author{
Sihuang Hu$^{\text{a}}$, Shuxing Li$^{\text{a}}$, Tao Zhang$^{\text{a}}$,
Tao Feng$^{\text{a}}$ and Gennian Ge$^{\text{b,c,}}$\thanks{Corresponding author. Email address: gnge@zju.edu.cn}\\
  \footnotesize $^{\text{a}}$ Department of Mathematics, Zhejiang University, Hangzhou 310027, Zhejiang, China\\
  \footnotesize $^{\text{b}}$ School of Mathematical Sciences, Capital Normal University, Beijing, 100048, China\\
\footnotesize $^{\text{c}}$ Beijing Center for Mathematics and Information
Interdisciplinary Sciences,
Beijing, 100048, China.}

\begin{document}
\date{}\maketitle

\begin{abstract}
  Planar functions in odd characteristic were introduced by Dembowski and Ostrom
  in order to construct finite projective planes in 1968. They were also used in
  the constructions of DES-like iterated ciphers, error-correcting codes, and signal
  sets.
  Recently, a new notion of pseudo-planar functions in even characteristic was proposed by Zhou.
  These new pseudo-planar functions, as an analogue of planar functions in odd characteristic,
  also bring about finite projective planes. There are three known infinite families of pseudo-planar monomial functions constructed by Schmidt and Zhou, and Scherr and Zieve.
  In this paper, three new classes of pseudo-planar binomials are provided.
  Moreover, we find that each pseudo-planar function gives an association scheme which is defined on a Galois ring.

\medskip
\noindent {{\it Key words and phrases\/}:
Pseudo-Planar function, relative difference set, projective plane, association scheme
}\\
\smallskip

\noindent {{\it AMS subject classifications\/}: Primary 05B10, 05E30,  94A60.}
\end{abstract}


\section{Introduction}\label{sec:intro}

Let $q=p^n$ where $p$ is an odd prime and $n$ is a positive integer. A function $f:\F_{q}\rightarrow\F_{q}$ is {\em planar} if the mapping
\begin{equation}
\label{def}
x\rightarrow f(x+\epsilon)-f(x)
\end{equation}
is a permutation of $\F_{q}$ for each $\epsilon\in\F_{q}^*$.
Planar functions were introduced by Dembowski and Ostrom~\cite{DO} to construct
finite projective planes over finite fields with odd characteristic.
Apart from this, planar functions emerge from many other applications.
In the cryptography literature, they are called {\em perfect nonlinear functions}~\cite{NK},
and used in the constructions of DES-like iterated ciphers, since
they are optimally resistant to differential cryptanalysis.
Carlet, Ding, and Yuan~\cite{CDY,D,YCD}, among others, utilized planar functions
to construct error-correcting codes, which are then employed to design secret sharing schemes.
Planar functions are also applied to the construction of authentication codes~\cite{DN}, constant composition codes~\cite{DY1} and signal sets~\cite{DY}. Besides, planar functions induce many combinatorial objects such as skew Hadamard difference sets and Paley type partial difference sets~\cite{WQWX}.

When $p=2$, there are no planar functions over $\F_{2^n}$, since if $x$ satisfies
$f(x+\epsilon)-f(x)=d$, then so does $x+\epsilon$. As an alternative, a function $f:\F_{2^n}\rightarrow\F_{2^n} $ is said to be {\em almost perfect nonlinear} if the mapping (\ref{def}) is $2$-to-$1$ for every $\epsilon\in\F_{2^n}^*$. However, there is no apparent link between almost perfect nonlinear functions and finite projective planes. Recently, Zhou~\cite{Z} put forward
a definition of ``planar" functions over finite fields with characteristic two, which give rise to
finite projective planes. From now on, we call a function $f:\F_{2^n}\rightarrow\F_{2^n}$
{\em pseudo-planar} if
$$
x\rightarrow f(x+\epsilon)+f(x)+\epsilon x$$
is a permutation on $\F_{2^n}$ for each $\epsilon\in\F_{2^n}^*$.
Note that Zhou~\cite{Z} called such functions ``planar",
and the term ``pseudo-planar" was first used by Abdukhalikov~\cite{Abdukhalikov}
to avoid confusion with planar functions in odd characteristic.

The pseudo-planar monomial functions have been investigated by Schmidt and Zhou~\cite{SZh2013},
and Scherr and Zieve~\cite{SZi2013}.
They are listed in Table \ref{tab:knownPlanarMonomials},
where $\Tr_{n/2}$ denotes the trace function from $\F_{2^{n/2}}$ to $\F_2$.
\begin{table}
\begin{center}
\caption{The known pseudo-planar monomials on $\F_{2^n}$}
\label{tab:knownPlanarMonomials}
\begin{tabular}{ccc}
\hline
Function  &  Condition & Reference \\ \hline
$a x^{2^k}$  &  $a \in \F_{2^n}^*$  & trivial \\
$a x^{2^k+1}$     &  $n=2k, a \in \F_{2^{n/2}}^*, \Tr_{n/2}(a)=0$ & \cite[Theorem 6]{SZh2013}\\
\multirow{2}{*}{$a x^{4^k(4^k+1)}$} & $n=6k$, $a \in \F_{2^n}^*$, $a$ is a $(4^k-1)$-th  & \multirow{2}{*}{\cite[Theorem 1.1]{SZi2013}} \\
                                    & power but not a $3(4^k-1)$-th power & \\ \hline
\end{tabular}
\end{center}
\end{table}
In this paper, we construct three new classes of pseudo-planar binomial functions,
at least two of them are infinite families.
Association schemes form a central part of algebraic combinatorics, and
play important roles in several branches of mathematics, such as coding
theory and graph theory.
One interesting result we obtained is that pseudo-planar
functions will always give 5-class association schemes which are
defined on Galois rings. Our construction can be regarded as an
analogue of the one studied by Liebler and Mena~\cite{LM}, and
Bonnecaze and Duursma~\cite{BD}. Similar (but symmetric) 4-class
association schemes were constructed by Abdukhalikov, Bannai and
Suda~\cite{ABS}, and LeCompte, Martin and Owens~\cite{LMO}.
Analogous to the case of almost perfect nonlinear functions, we
define the Fourier spectrum of pseudo-planar functions. With the
information obtained from eigenmatrices of those association
schemes, we completely determine the Fourier spectrum.

The rest of this paper is organized as follows. Section~\ref{sec:preliminary} contains the background of the  mathematical objectives involved. Section~\ref{sec:constructions} presents the construction of three classes of pseudo-planar binomial functions. Section~\ref{sec:assosche} investigates the association schemes arising from pseudo-planar functions. Section~\ref{sec:concluding} concludes this paper.

\section{Preliminaries}\label{sec:preliminary}

\subsection{Relative difference sets and the inversion formula}

Let $G$ be a finite abelian group and let $N$ be a subgroup of $G$.
A subset $D$ of $G$ is a \emph{relative difference set (RDS)} with
parameters $(\abs{G}/\abs{N},\abs{N},\abs{D},\lambda)$ and \emph{forbidden subgroup} $N$ if the list of nonzero differences of $D$ comprises every element in $G\setminus N$ exactly $\lambda$ times, and no element of $N\backslash\{0\}$. The \emph{group ring} $\ZZ[G]$ is a free abelian group with a basis $\{g \mid g \in G\}$. For any set $A$ whose elements belong to $G$ ($A$ may be a multiset), we identify $A$ and the group ring element $\sum_{g \in A} d_g g$ throughout the rest of the paper, where $d_g$ is the multiplicity of $g$ appears in $A$. Given any $A=\sum d_g g \in \ZZ[G]$, we define $A^{(-1)}=\sum d_g g^{-1}$, in which $g^{-1}$ is the inverse of $g$ with respect to the operation of group $G$. Using the language of group ring, a relative difference set $D$ in $G$ with forbidden group $N$ can be expressed in a succinct way:
$$
DD^{(-1)}=|D| 1_G + \lambda(G - N),
$$
where $1_G$ is the identity of group $G$.

For a finite abelian group $G$, denote its character group by $\widehat{G}$.
For any $A=\sum d_g g$ and $\X \in \widehat{G}$, define $\X(A)=\sum d_g \X(g)$.
The following \emph{inversion formula} shows that $A$ is completely determined
by its character value $\X(A)$, where $\X$ ranges over $\widehat{G}$.
For convenience, we will denote $d_{1_G}$ by $[A]_0$ throughout this paper.

\begin{lemma}\label{inversion formula}
Let $G$ be an abelian group. If $A=\sum_{g\in G}d_g
g\in \ZZ[G]$, then
$$
d_h=\frac{1}{|G|}\sum_{\chi\in\widehat G}\chi(A) \chi (h^{-1}),
$$
for all $h\in G$. In particular, we have
$$
[A]_0=\frac{1}{|G|}\sum_{\chi\in\widehat G}\chi(A).
$$
\end{lemma}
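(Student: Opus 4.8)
The plan is to deduce the inversion formula from the orthogonality relation for the characters of a finite abelian group, namely
$$
\sum_{\chi\in\widehat G}\chi(g)=\begin{cases}|G| & \text{if } g=1_G,\\ 0 & \text{otherwise.}\end{cases}
$$
I would first prove this relation. If $g=1_G$, every $\chi$ sends $g$ to $1$, so the sum is $|\widehat G|=|G|$. If $g\neq 1_G$, then since $G$ is a finite abelian group there is a character $\psi\in\widehat G$ with $\psi(g)\neq 1$; writing $S=\sum_{\chi\in\widehat G}\chi(g)$ and using that $\chi\mapsto\psi\chi$ is a bijection of the group $\widehat G$, we get $\psi(g)S=\sum_{\chi}(\psi\chi)(g)=S$, hence $(\psi(g)-1)S=0$ and $S=0$.

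With this in hand the rest is a routine interchange of finite sums. Substituting $\chi(A)=\sum_{g\in G}d_g\chi(g)$ into the right-hand side and using $\chi(g)\chi(h^{-1})=\chi(gh^{-1})$ gives
$$
\frac{1}{|G|}\sum_{\chi\in\widehat G}\chi(A)\chi(h^{-1})=\frac{1}{|G|}\sum_{g\in G}d_g\sum_{\chi\in\widehat G}\chi(gh^{-1}).
$$
By the orthogonality relation the inner sum equals $|G|$ when $g=h$ and $0$ otherwise, so only the term $g=h$ survives and the expression collapses to $d_h$, as claimed. The ``in particular'' statement is the special case $h=1_G$: here $\chi(1_G^{-1})=\chi(1_G)=1$ for all $\chi$, and $[A]_0$ is by definition $d_{1_G}$, so the formula reads $[A]_0=\frac{1}{|G|}\sum_{\chi\in\widehat G}\chi(A)$.

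There is essentially no obstacle here; the only input beyond formal manipulation is the existence of a separating character for each $g\neq 1_G$, which is standard (it follows from the structure theorem for finite abelian groups, or from $G\cong\widehat{\widehat G}$). The one point to be careful about is that $\widehat G$ is itself a finite abelian group under pointwise multiplication with $|\widehat G|=|G|$, which is what legitimizes the reindexing $\chi\mapsto\psi\chi$ and the evaluation of the $g=1_G$ case.
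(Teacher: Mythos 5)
Your proof is correct and is the standard argument via the orthogonality relation for characters of a finite abelian group; the paper itself states this lemma without proof, treating it as well known, so there is nothing to compare beyond noting that your derivation (separating character, reindexing $\chi\mapsto\psi\chi$, then interchanging the finite sums) is exactly the canonical one. All steps, including the justification of the orthogonality relation and the specialization to $h=1_G$, are sound.
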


\subsection{Galois rings}

We give a brief introduction to the \emph{Galois ring} $\GR$. Let $R=\GR$, then the additive group of $R$ can be identified with the abelian group $(\ZZ_4^n,+)$. Let $Z=\{2x \mid x \in R\}$, then $Z$ consists of $0$ and the zero divisors of $R$, where $0$ is the identity with respect to the addition. The unit group $R \setminus Z$ contains a cyclic subgroup of order $2^n-1$ generated by an element $\xi$. The set $T=\{\xi^i \mid 0 \le i \le 2^n-2\} \cup \{0\}$ is called \emph{Teichm\"{u}ller system}. For any $x \in R$, there exists a unique representation
\begin{equation}
\label{eqn}
x=a+2b,
\end{equation}
where $a,b \in T$. For any $x \in R$, write $\sqrt{x}$ for $x^{2^{n-1}}$. If we define the addition on $T$ by
$$
x \oplus y = x+y+2\sqrt{xy},
$$
then $(T,\oplus,\cdot)$ is a finite field with $2^n$ elements. Hence, a pseudo-planar function over $\F_{2^n}$ can also be identified with a function from $T$ into itself.
For any $x \in R$, we have $x=a+2b$ for some $a,b \in T$. The map
$$
\sigma:a+2b\mapsto a^2+2b^2
$$
is the Frobenius map of $R$, which is a ring automorphism. For any $a \in R$, the trace function of $R$ is the map $\Tr:R\rightarrow \ZZ_4$ defined by
$$
\Tr(a)=\sum_{i=0}^{n-1} \sigma^i(a).
$$
Let $\mathbf{i}=\sqrt{-1}$. For any $a \in R$, define the map $\X_a:R\rightarrow\CC$ by
$$
\X_a(x)=\mathbf{i}^{\Tr(ax)}, \quad \forall\, x \in R.
$$
Then the character group $\widehat{R}=\{\X_a \mid a \in R \}$.
For more information on Galois rings, please refer to \cite{HKCSS,LM,Wan}.

\subsection{Association schemes}

Let $X$ be a nonempty finite set. Let $R_0,R_1,\cdots, R_d$ be a partition of $X\times X$ satisfying that
\begin{enumerate}
\item[(i)] $R_0=\{(x,x)\mid x\in X\}$;
\item[(ii)] for any $0 \le i \le d$, there exists $0 \le i^{\prime} \le d$ such that $R_{i^{\prime}}=\{(y,x) \mid (x,y ) \in R_i\}$.
\end{enumerate}
For each $R_i$, its adjacency matrix is denoted by $A_i$, whose $(x,y)$-th entry is $1$ if $(x,y)\in R_i$ and $0$ otherwise. We call $(X,\{R_i\}_{i=0}^d)$ a {\em $d$-class association scheme} if there exist nonnegative integers $p_{i,j}^k$ such that
\[
A_iA_j=\sum_{k=0}^dp_{i,j}^kA_k,
\]
where $0 \le i,j,k \le d$.
The $\CC$-linear span of $A_0,A_1,\cdots,A_d$ forms a semisimple algebra of dimension $d+1$. Hence, there exists another basis $\{E_0,E_1,\cdots,E_d\}$ consisting of pairwise orthogonal idempotents.
So we have
$$A_i = \sum_{j=0}^d P_{ji}E_j$$
and
$$E_i=\frac{1}{|X|}\sum_{j=0}^{d}Q_{ji}A_j$$
for certain complex numbers $P_{ji}, Q_{ji}$. The matrix
$P=(P_{ji})$ (resp. $Q=(Q_{ji})$) is called the first (resp. second) eigenmatrix.
Clearly, we have $PQ=|X|I$, where $I$ denotes the identity matrix of order $|X|$.

Let $\{S_i \mid 0 \le i \le d\}$ be a partition of $X$. It induces a partition $\{R_i \mid 0 \le i \le d\}$ on $X \times X$ with
$$
R_i=\{(x,y) \mid x-y \in S_i\}.
$$
If $(X,\{R_i\}_{i=0}^d)$ forms an association scheme, then we call $(X,\{S_i\}_{i=0}^d)$ a \emph{Schur ring}.

Assume that $(X,\{S_i\}_{i=0}^d)$ is a Schur ring. There is an equivalence relation defined on the character group $\widehat{X}$ of $X$ as follows: $\chi\sim\chi'$ if and only if $\chi(S_i)=\chi'(S_i)$ for each $0\leq i\leq d$. Denote by $T_0, T_1,\cdots,T_d$ the equivalence classes, with $T_0$ consisting of only the principal character. Then $(\widehat{X},\{T_i\}_{i=0}^d)$ also forms a Schur ring, called the {\em dual} of $(X,\{S_i\}_{i=0}^d)$. The first eigenmatrix of the dual scheme is equal to the second eigenmatrix of the original scheme. Please refer to \cite{BI} or \cite{BCN} for more details.


We shall need the following well-known criterion due to Bannai \cite{Ban}
and Muzychuk \cite{Mu}.
\begin{theorem}[Bannai-Muzychuk criterion]
  Let $P$ be the first eigenmatrix of an association scheme $(X, \{R_i\}_{0\leq i\leq d})$, and $\Lambda_0:=\{0\}, \Lambda_1,\ldots ,\Lambda_{d'}$ be a partition of $\{0,1,\ldots ,d\}$. Then $(X, \{R_{\Lambda_i}\}_{0\leq i\leq d'})$ forms an association scheme if and only if there exists a partition $\{\Delta_i\}_{0\leq i\leq d'}$ of $\{0,1,2,\ldots ,d\}$ with $\Delta_0=\{0\}$ such that each $(\Delta_i, \Lambda_j)$-block of $P$ has a constant row sum. Moreover, the constant row sum of the $(\Delta_i, \Lambda_j)$-block is the $(i,j)$-th entry of the first eigenmatrix of the fusion scheme.
\end{theorem}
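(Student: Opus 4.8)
The plan is to lift the whole statement to the Bose--Mesner algebra $\mathcal{A}=\langle A_0,\ldots,A_d\rangle=\langle E_0,\ldots,E_d\rangle$ of the given scheme and to read it off as an assertion about subalgebras of $\mathcal{A}$. Write $A_{\Lambda_i}=\sum_{l\in\Lambda_i}A_l$ and $\mathcal{B}:=\langle A_{\Lambda_0},\ldots,A_{\Lambda_{d'}}\rangle$. The matrices $A_{\Lambda_0},\ldots,A_{\Lambda_{d'}}$ are $0$-$1$ matrices with pairwise disjoint supports summing to the all-ones matrix $J$; in particular they are linearly independent, so $\dim\mathcal{B}=d'+1$, and $A_{\Lambda_0}=I$ because $\Lambda_0=\{0\}$. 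Consequently $(X,\{R_{\Lambda_i}\})$ is an association scheme exactly when $\mathcal{B}$ is closed under matrix multiplication, the intersection numbers $p_{i,j}^k$ then being automatically the nonnegative integer coefficients that express $A_{\Lambda_i}A_{\Lambda_j}$ in the basis $\{A_{\Lambda_k}\}$. So I would reduce the theorem to deciding when $\mathcal{B}$ is a subalgebra of $\mathcal{A}$, and to computing its primitive idempotents when it is.

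The structural input I would invoke here is standard. As a $\CC$-algebra, $\mathcal{A}\cong\CC^{d+1}$ with the $E_k$ serving as the coordinate idempotents; and any subspace of $\CC^{d+1}$ that contains the identity and is closed under multiplication is spanned by a system of pairwise orthogonal idempotents $F_0,\ldots,F_{d'}$ of the form
\[
F_i=\sum_{k\in\Delta_i}E_k ,
\]
where $\{\Delta_0,\ldots,\Delta_{d'}\}$ is a uniquely determined partition of $\{0,1,\ldots,d\}$. (Concretely, the idempotents of such a subspace form a finite Boolean algebra of subsets of $\{0,\ldots,d\}$ whose atoms are the $\Delta_i$.) Since $F_0$ must be the trivial idempotent $E_0=\tfrac1{|X|}J$, one gets $\Delta_0=\{0\}$.

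For the necessity, assume $(X,\{R_{\Lambda_i}\})$ is an association scheme, so $\mathcal{B}$ is a subalgebra and hence $\mathcal{B}=\langle F_0,\ldots,F_{d'}\rangle$ with $F_i=\sum_{k\in\Delta_i}E_k$ as above. By definition, the first eigenmatrix of this fusion scheme records the scalars $c_{ij}$ in $A_{\Lambda_j}=\sum_{i=0}^{d'}c_{ij}F_i$ (with $F_0$ the trivial idempotent). On the other hand, substituting $A_l=\sum_k P_{kl}E_k$ gives
\[
A_{\Lambda_j}=\sum_{l\in\Lambda_j}A_l=\sum_{k=0}^{d}\Bigl(\sum_{l\in\Lambda_j}P_{kl}\Bigr)E_k ,
\]
and matching the coefficient of $E_k$ over $k\in\Delta_i$ yields $\sum_{l\in\Lambda_j}P_{kl}=c_{ij}$ for every $k\in\Delta_i$; that is, the $(\Delta_i,\Lambda_j)$-block of $P$ has constant row sum $c_{ij}$. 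This establishes necessity and, simultaneously, the ``moreover'' clause. For sufficiency, given a partition $\{\Delta_i\}$ with $\Delta_0=\{0\}$ for which each $(\Delta_i,\Lambda_j)$-block of $P$ has constant row sum $p_{ij}$, I would set $F_i=\sum_{k\in\Delta_i}E_k$ and run this computation backwards to get $A_{\Lambda_j}=\sum_i p_{ij}F_i$; hence $\mathcal{B}\subseteq\langle F_0,\ldots,F_{d'}\rangle$, and since both spaces have dimension $d'+1$ (the $F_i$ being nonzero and pairwise orthogonal, hence independent) they coincide. Therefore $\mathcal{B}=\langle F_0,\ldots,F_{d'}\rangle$ is closed under multiplication (indeed $F_iF_j=\delta_{ij}F_i$), contains $I=\sum_iF_i$ and $J=|X|F_0$, so $(X,\{R_{\Lambda_i}\})$ is an association scheme, and by the first part its first eigenmatrix has $(i,j)$-entry $p_{ij}$.

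I expect the only step that is genuinely more than a change of basis to be the structural fact of the second paragraph --- that a subalgebra of $\mathcal{A}$ containing $I$ has primitive idempotents that are sums of the $E_k$ --- together with the attendant dimension count; everything else is direct substitution between the bases $\{A_i\}$ and $\{E_j\}$. One should also keep in mind that ``$(X,\{R_{\Lambda_i}\})$ forms an association scheme'' tacitly requires the partition $\{\Lambda_i\}$ to be compatible with transposition, i.e.\ that $\{l':l\in\Lambda_i\}$ is again one of the blocks, as demanded by condition (ii) of the definition above; this is an additional hypothesis on $\{\Lambda_i\}$ that is routine to verify in practice and independent of the argument sketched here.
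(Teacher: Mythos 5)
The paper does not actually prove this statement: it is quoted as a known criterion with citations to Bannai and to Muzychuk, so there is no in-paper argument to compare yours against. Judged on its own, your proof is correct and is essentially the standard one: identify the Bose--Mesner algebra $\mathcal{A}$ with $\CC^{d+1}$ via the primitive idempotents $E_k$, observe that unital subalgebras correspond to partitions $\{\Delta_i\}$ of $\{0,\dots,d\}$ via $F_i=\sum_{k\in\Delta_i}E_k$, translate membership of $A_{\Lambda_j}$ in $\langle F_0,\dots,F_{d'}\rangle$ into the constant-row-sum condition on the $(\Delta_i,\Lambda_j)$-blocks of $P$, and use the dimension count $\dim\mathcal{B}=d'+1$ to close the sufficiency direction; the identification $\Delta_0=\{0\}$ follows, as you say, from $\tfrac1{|X|}J=E_0\in\mathcal{B}$ being a primitive idempotent of $\mathcal{A}$. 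The one point I would tighten is your closing caveat: compatibility of $\{\Lambda_i\}$ with transposition is not an additional hypothesis to be imposed, but is forced by your own construction. Since the existence of $P$ presupposes a commutative scheme, the $E_k$ are Hermitian, so each $F_i$ is Hermitian and $\mathcal{B}=\langle F_0,\dots,F_{d'}\rangle$ is closed under conjugate transposition; being spanned by real matrices, $\mathcal{B}$ is then closed under transposition. A $0$-$1$ matrix of $\mathcal{A}$ lying in $\mathcal{B}$ must be $\sum_{i\in S}A_{\Lambda_i}$ for some index set $S$, so each $A_{\Lambda_i}^{T}=A_{\{l'\,:\,l\in\Lambda_i\}}$ shows $\{l':l\in\Lambda_i\}$ is a union of blocks, and since $i\mapsto i'$ is an involution this union must be a single block. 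With that observation folded in, your argument is complete.
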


\section{Pseudo-planar binomials}\label{sec:constructions}

It is well-known that every function from $\F_{2^n}$ to itself can be
uniquely written as a polynomial function of degree at most $2^n-1$.
The monomial functions $x\mapsto cx^t$ for some $c\in\F_{2^n}$ and
some integer $t$ are the simplest nontrivial polynomial functions.
An integer $t$ satisfying that $1\le t\le 2^n-1$ is a {\em pseudo-planar exponent}
of $\F_{2^n}$ if the function $x\mapsto cx^t$ is pseudo-planar on $\F_{2^n}$
for some $c\in\F_{2^n}^*.$ The pseudo-planar monomials were first
investigated by Schmidt and Zhou~\cite{SZh2013}, and subsequently by Scherr
and Zieve~\cite{SZi2013}. Moreover, in \cite[Conjecture 8]{SZh2013},
it is conjectured that the only exponents that give pseudo-planar monomials
are those listed in Table~\ref{tab:knownPlanarMonomials}.

Besides pseudo-planar monomial functions, the next simplest cases are pseudo-planar
binomials. In this section, we construct three classes of pseudo-planar
binomials on the field $\F_{2^{3m}}$. The following result will be
useful.

\begin{lemma}[{\cite[p. 362]{LN}}]
  Let $q$ be a prime power and $\F_{q^r}$ be an extension of $\F_q$. Then the linearized
  polynomial
  $$ L(x)=\sum_{i=0}^{r-1} c_{i}x^{q^i} \in \F_{q^r}[x]$$
  is a permutation of $\F_{q^r}$ if and only if
  \begin{equation*}
    \det\left(
    \begin{array}{ccccc}
      c_0 & c_{r-1}^q & c_{r-2}^{q^2} &\cdots &c_1^{q^{r-1}}\\
      c_1 & c_{0}^q & c_{r-1}^{q^2} &\cdots &c_2^{q^{r-1}}\\
      c_2 & c_{1}^q & c_{0}^{q^2} &\cdots &c_3^{q^{r-1}}\\
      \vdots & \vdots & \vdots &   & \vdots \\
      c_{r-1} & c_{r-2}^q & c_{r-3}^{q^2} &\cdots &c_0^{q^{r-1}}\\
    \end{array}\right)\ne 0.
  \end{equation*}
  \label{lem:criterion}
\end{lemma}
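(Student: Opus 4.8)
The plan is to view $L$ as an $\F_q$-linear endomorphism of $\F_{q^r}$, regarded as an $r$-dimensional vector space over $\F_q$, and to identify the determinant displayed in the lemma, up to conjugation, with the determinant of $L$ in a fixed basis. First I would record that $L$ is additive and that $L(cx)=cL(x)$ for every $c\in\F_q$ (since $c^{q^i}=c$), so $L\colon\F_{q^r}\to\F_{q^r}$ is indeed $\F_q$-linear; because $\F_{q^r}$ is finite, $L$ is a permutation if and only if it is injective, i.e.\ if and only if $\det M\neq 0$, where $M=(M_{\ell j})\in M_r(\F_q)$ is the matrix determined by $L(w_j)=\sum_{\ell}M_{\ell j}w_\ell$ for some fixed $\F_q$-basis $w_1,\dots,w_r$ of $\F_{q^r}$. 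It then suffices to prove that $\det M=\det D$, where $D$ is the matrix in the statement.

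To do this I would introduce the Moore matrix $W=(w_j^{q^i})_{0\le i\le r-1,\,1\le j\le r}$, which is nonsingular because $w_1,\dots,w_r$ are linearly independent over $\F_q$ (the classical nonvanishing criterion for Moore determinants; see \cite{LN}). The heart of the argument is to compute the matrix $\widetilde L:=\bigl(L(w_j)^{q^i}\bigr)_{0\le i\le r-1,\,1\le j\le r}$ in two ways. On one hand, since each $M_{\ell j}\in\F_q$ is fixed by the Frobenius, $L(w_j)^{q^i}=\sum_\ell M_{\ell j}w_\ell^{q^i}$, so $\widetilde L=WM$. On the other hand, expanding $L$ and reducing each exponent $i+k$ modulo $r$ (using $w_j^{q^r}=w_j$), one gets $L(w_j)^{q^i}=\sum_{m} c_{(m-i)\bmod r}^{\,q^i}\,w_j^{q^m}$, which exhibits $\widetilde L=D^{\mathsf T}W$. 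Comparing the two expressions gives $D^{\mathsf T}=WMW^{-1}$, whence $\det D=\det D^{\mathsf T}=\det M$; since $\det M\in\F_q$ is nonzero exactly when $L$ is a permutation of $\F_{q^r}$, the lemma follows.

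The only genuinely delicate point is the second evaluation of $\widetilde L$: one must track carefully how the Frobenius power $q^{i+k}$ wraps around modulo $r$ so that the resulting array is recognized as the transpose of $D$ (and not $D$ itself or some other rearrangement), and check that the twists $q^i$ sit on the correct entries. Everything else is routine linear algebra together with the standard fact that the Moore matrix of a basis is invertible. (Alternatively, one could argue ring-theoretically: $L\mapsto D$ is an anti-homomorphism from the ring of linearized polynomials modulo $x^{q^r}-x$ under composition into $M_r(\F_{q^r})$, sending $x$ to the identity matrix; if $L$ is a permutation then its $\F_q$-linear inverse is again a linearized polynomial, which forces $D$ to be invertible, while a nonzero root $\alpha$ of a non-permutation $L$ yields the nonzero vector $(\alpha,\alpha^{q},\dots,\alpha^{q^{r-1}})^{\mathsf T}$ in the kernel of $D^{\mathsf T}$, forcing $\det D=0$.)
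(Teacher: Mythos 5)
The paper does not prove this lemma; it is quoted verbatim from Lidl and Niederreiter, so there is no internal proof to compare against. Your argument is correct and is essentially the standard one: writing $L(w_j)^{q^i}=\sum_k c_k^{q^i}w_j^{q^{(k+i)\bmod r}}=\sum_m c_{(m-i)\bmod r}^{q^i}w_j^{q^m}$ does identify the coefficient array as $D^{\mathsf T}$ (since $D_{ij}=c_{(i-j)\bmod r}^{q^j}$ gives $D^{\mathsf T}_{im}=c_{(m-i)\bmod r}^{q^i}$), so $D^{\mathsf T}W=WM$ with $W$ the invertible Moore matrix, and $\det D=\det M$ as claimed.
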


Let $m$ be a positive integer. The relative trace (resp. norm) from
$\F_{2^{3m}}$ to $\F_{2^m}$ is denoted by $\Tr_3$ (resp. $\N_3$) from now on.

\begin{proposition}
\label{prop:construction2}
Suppose $m$ is an even positive integer, then the function
$$
f(x)=a^{{2^{2m}}+1}x^{{2^{2m}}+1}+a^{-({2^m}+1)}x^{{2^m}+1}
$$
is pseudo-planar on $\F_{2^{3m}}$ if and only if
$$\Tr_3((a^{{2^{2m}}+{2^m}}+a^{-{2^{2m}}-{2^m}-2})(a^{{2^m}+1}+\epsilon^{{2^m}-1})\epsilon^{{2^m}+2}+a^{{2^m}-{2^{2m}}}\epsilon^3+\epsilon) \ne 0$$
 for all $\epsilon \in \F_{2^{3m}}^*$.
\end{proposition}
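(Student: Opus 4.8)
The plan is to reduce the pseudo-planarity condition to a statement about permutation polynomials and then apply Lemma~\ref{lem:criterion}. Recall that $f$ is pseudo-planar on $\F_{2^{3m}}$ exactly when, for every $\epsilon\in\F_{2^{3m}}^*$, the map $x\mapsto f(x+\epsilon)+f(x)+\epsilon x$ is a permutation. First I would compute this difference explicitly. Since $f(x)=a^{2^{2m}+1}x^{2^{2m}+1}+a^{-(2^m+1)}x^{2^m+1}$ is a Dembowski--Ostrom polynomial (each exponent $2^{2m}+1$ and $2^m+1$ is a sum of two powers of $2$), the map $D_\epsilon f(x):=f(x+\epsilon)+f(x)$ is $\F_2$-affine in $x$ for fixed $\epsilon$; subtracting the affine part and adding $\epsilon x$ yields an $\F_2$-linearized polynomial $L_\epsilon(x)$ in $x$ (the constant term $f(\epsilon)$ being irrelevant to the permutation property). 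Concretely, expanding $(x+\epsilon)^{2^{2m}+1}=x^{2^{2m}+1}+\epsilon^{2^{2m}}x+\epsilon x^{2^{2m}}+\epsilon^{2^{2m}+1}$ and similarly for the other term, one gets
$$
L_\epsilon(x)=c_0 x + c_1 x^{2^m} + c_2 x^{2^{2m}},
$$
where, writing $q=2^m$ so that $\F_{2^{3m}}/\F_{2^m}$ has degree $3$, the coefficients are
$$
c_2 = a^{q^2+1}\epsilon,\qquad c_1 = a^{-(q+1)}\epsilon,\qquad c_0 = a^{q^2+1}\epsilon^{q^2} + a^{-(q+1)}\epsilon^{q} + \epsilon.
$$
I should double-check the bookkeeping here — which power of $\epsilon$ attaches to which Frobenius power of $x$ — but this is the shape of the answer.

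Next I would invoke Lemma~\ref{lem:criterion} with $q=2^m$ and $r=3$: the linearized polynomial $L_\epsilon$ permutes $\F_{q^3}=\F_{2^{3m}}$ if and only if the $3\times 3$ circulant-type determinant
$$
\det\begin{pmatrix} c_0 & c_2^{q} & c_1^{q^2}\\ c_1 & c_0^{q} & c_2^{q^2}\\ c_2 & c_1^{q} & c_0^{q^2}\end{pmatrix}\ne 0.
$$
Expanding this $3\times 3$ determinant gives
$$
\Delta_\epsilon = c_0 c_0^{q} c_0^{q^2} + c_1 c_1^{q} c_1^{q^2} + c_2 c_2^{q} c_2^{q^2} - c_0 c_2^{q} c_2^{q^2}\cdot\big(\text{corrections}\big),
$$
more precisely $\Delta_\epsilon = \N_3(c_0)+\N_3(c_1)+\N_3(c_2) - \big(c_0 c_1^{q} c_2^{q^2} + c_2 c_0^{q} c_1^{q^2}\big)$ — that is, $\Delta_\epsilon = \N_3(c_0)+\N_3(c_1)+\N_3(c_2) - \Tr_3(c_0 c_1^q c_2^{q^2})$ after checking that the two ``off-diagonal'' triple products are Frobenius-conjugate over $\F_{2^m}$. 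The goal is then to show that $\Delta_\epsilon = 0$ is equivalent to the stated trace condition failing, i.e. to
$$
\Tr_3\big((a^{q^2+q}+a^{-q^2-q-2})(a^{q+1}+\epsilon^{q-1})\epsilon^{q+2}+a^{q-q^2}\epsilon^3+\epsilon\big)=0.
$$

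The main work is the algebraic simplification of $\Delta_\epsilon$. I would substitute the explicit $c_i$, pull out the common factor $\N_3(\epsilon)=\epsilon^{q^2+q+1}$ that appears in $\N_3(c_1)$ and $\N_3(c_2)$ and in the cross term (while $\N_3(c_0)$ is messier), and then divide the whole expression by a well-chosen monomial in $a$ and $\epsilon$ so that the remaining terms organize into the pattern $(\cdots)(a^{q+1}+\epsilon^{q-1})\epsilon^{q+2}+a^{q-q^2}\epsilon^3+\epsilon$. Here the hypothesis that $m$ is even enters: it forces $3\mid 2^m-1$, hence $\gcd(3,q-1)=3$, which should be used to control which exponents of $a$ and $\epsilon$ are cubes / how the $\N_3$ of the three-term $c_0$ telescopes; I expect some $\epsilon^{q^2+q+1}=\N_3(\epsilon)$ terms and some $a$-norm terms to be absorbed using $\N_3(a)=a^{q^2+q+1}$ and the relation $q^2+q+1\equiv 0\pmod{?}$. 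The hard part will be precisely this: verifying that the fully expanded determinant, which a priori has on the order of a dozen monomial terms in $a$ and $\epsilon$ with mixed Frobenius twists, collapses after dividing by the right factor into the compact trace expression in the statement — in particular matching the coefficient $a^{q^2+q}+a^{-q^2-q-2}$ and confirming the inner factor $a^{q+1}+\epsilon^{q-1}$ is not an artifact of a sign or exponent slip. Once the identity $\Delta_\epsilon = (\text{nonzero monomial})\cdot \Tr_3(\cdots)$ is established, the proposition follows immediately, since $\Delta_\epsilon\ne 0$ for all $\epsilon\in\F_{2^{3m}}^*$ is then literally the stated trace condition.
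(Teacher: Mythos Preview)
Your approach is essentially the paper's: expand $f(x+\epsilon)+f(x)+\epsilon x$, strip the constant to obtain the $\F_{2^m}$-linearized polynomial $L_\epsilon(x)=c_0x+c_1x^{q}+c_2x^{q^2}$ with exactly the coefficients you wrote, and apply Lemma~\ref{lem:criterion} with $r=3$. The paper then simply asserts that the resulting $3\times 3$ determinant equals the stated $\Tr_3(\cdots)$, leaving the algebra to the reader.

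Two small course corrections. First, the determinant equals the trace expression \emph{directly}, not up to a nonzero monomial factor; do not hunt for something to divide out. Concretely (in characteristic~$2$) the six permutation terms give
\[
\Delta_\epsilon=\N_3(c_0)+\N_3(c_1)+\N_3(c_2)+\Tr_3\!\big(c_0c_1^{q}c_2^{q^2}\big),
\]
with three (not two) Frobenius-conjugate off-diagonal products summing to the trace; expanding $\N_3(c_0)$ and the cross term and cancelling in characteristic~$2$ yields exactly the expression in the statement. Second, your speculation that the hypothesis ``$m$ even'' is what makes the simplification collapse, via $3\mid 2^m-1$, is a red herring: the paper's proof is a straight determinant computation and does not invoke the parity of $m$ at any point.
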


\begin{proof}
Set $t=2^m$. For each $\epsilon \in \F_{2^{3m}}^{*}$,
\begin{align*}
f(x+\epsilon)+f(x)+\epsilon x
=&a^{t^2+1}\epsilon x^{t^2}+a^{-(t+1)}\epsilon x^t+(a^{t^2+1}\epsilon^{t^2}+a^{-(t+1)}\epsilon^t+\epsilon)x+
(a\epsilon)^{t^2+1}+(a^{-1}\epsilon)^{t+1}.
\end{align*}
Then it suffices to show that the polynomial
$$
G_\epsilon(x):=a^{t^2+1}\epsilon x^{t^2}+a^{-(t+1)}\epsilon x^t+(a^{t^2+1}\epsilon^{t^2}+a^{-(t+1)}\epsilon^t+\epsilon)x
$$
is a permutation on $\F_{2^{3m}}$ for any $\epsilon \in \F_{2^{3m}}^*$.
By Lemma~\ref{lem:criterion}, we see that $G_\epsilon(x)$ is a permutation
if and only if
\begin{align*}
&  \det\left(
  \begin{array}{ccc}
    a^{t^2+1}\epsilon^{t^2}+a^{-(t+1)}\epsilon^t+\epsilon &a^{t+1}\epsilon^t & a^{-(t^2+1)}\epsilon^{t^2}\\
    a^{-(t+1)}\epsilon &a^{t+1}\epsilon+a^{-(t^2+t)}\epsilon^{t^2}+\epsilon^t& a^{t^2+t}\epsilon^{t^2} \\
    a^{t^2+1}\epsilon&a^{-(t^2+t)}\epsilon^t &a^{t^2+t}\epsilon^t+a^{-(t^2+1)}\epsilon+\epsilon^{t^2} \\
  \end{array}\right)\\
=&\Tr_3((a^{t^2+t}+a^{-t^2-t-2})(a^{t+1}+\epsilon^{t-1})\epsilon^{t+2}+a^{t-t^2}\epsilon^3+\epsilon)\\
=&\Tr_3((a^{{2^{2m}}+{2^m}}+a^{-{2^{2m}}-{2^m}-2})(a^{{2^m}+1}+\epsilon^{{2^m}-1})\epsilon^{{2^m}+2}+a^{{2^m}-{2^{2m}}}\epsilon^3+\epsilon)\\
\ne& 0.
\end{align*}
This finishes the proof.
\end{proof}

\begin{remark}
We are unable to simplify the necessary
and sufficient conditions in Proposition~\ref{prop:construction2} to
provide a more concise criterion. We also cannot decide whether this construction will
give infinite families of pseudo-planar binomials or not.
\end{remark}

Here we give two examples.
For any $a \in \F_{2^n}^*$, denote the multiplicative order of $a$ by $\ord(a)$.

\begin{example}
When $m=2$, direct computation via computer program shows that
$$
f(x)=a^{17}x^{17}+a^{-5}x^5
$$
is pseudo-planar on $\F_{2^{3m}}$ if and only if $\ord(a) \in \{9,63\}$, which coincides with the condition in Proposition~\ref{prop:construction2}.
\end{example}

\begin{example}
When $m=4$, direct computation via computer program shows that
$$
f(x)=a^{257}x^{257}+a^{-17}x^{17}
$$
is pseudo-planar on $\F_{2^{3m}}$ if and only if $\ord(a) \in \{9,63,117,819\}$, which coincides with the condition in Proposition~\ref{prop:construction2}.
\end{example}

In the following of this section, we give two infinite families of pseudo-planar binomials.

Let $m$ be a positive integer. Suppose $\epsilon\in \F_{2^{3m}}^*\backslash\F_{2^m}$ and its
minimal polynomial over $\F_{2^m}$ is
$$ C_{\epsilon}(x)=x^3+B_1x^2+B_2x+B_3\in\F_{2^m}[x]\quad (B_{3}\ne 0).$$
Denote the three roots of $C_{\epsilon}(x)$ by $x_1(=\epsilon)$, $x_2(=\epsilon^{2^m}),$
and $x_3(=\epsilon^{2^{2m}})$. It follows that
\begin{align*}
  B_1=& x_1+x_2+x_3=\Tr_3(\epsilon),\\
  B_2=& x_1x_2+x_1x_3+x_2x_3,\\
  B_3=& x_1x_2x_3=\N_3(\epsilon).
\end{align*}
We can verify that
\begin{align*}
  \Tr_3(\epsilon^3)=&x_1^3+x_2^3+x_3^3\\
                   =&(x_1+x_2+x_3)^3+x_1x_2x_3+(x_1+x_2+x_3)( x_1x_2+x_1x_3+x_2x_3)\\
                   =&B_1^3+B_3+B_1B_2,\\
\Tr_3(\epsilon^{1+2^{m+1}})=&\Tr_3(x_1x_2^2) =x_1x_2^2+x_2x_3^2+x_3x_1^2.
\end{align*}
Set $u_1=\Tr_3(x_1x_3^2)$ and $u_2=\Tr_3(x_1x_2^2)$.
Then we have
\begin{align}
  u_1+u_2=&
  B_3+B_1B_2,\label{eqn:u_sum}\\
  u_1u_2=&B_1^3B_3+B_2^3+B_3^2.\label{eqn:u_product}
\end{align}

We would like to point out that part of the following proof for Proposition~\ref{prop:construction3}
with $m\equiv 1\pmod 3$
 is provided by
one of the anonymous referee and communicated with the Associate Editor.

\begin{proposition}
\label{prop:construction3}
  Let $m$ be a positive integer and $m\not\equiv 2\pmod 3$. Then
  $$ f(x)=x^{2^m+1}+x^{2^{2m}+2^m} $$
  is pseudo-planar on $\F_{2^{3m}}$.
\end{proposition}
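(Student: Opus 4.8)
The goal is to show that, for $m \not\equiv 2 \pmod 3$, the map $x \mapsto f(x+\epsilon)+f(x)+\epsilon x$ is a permutation of $\F_{2^{3m}}$ for every $\epsilon \in \F_{2^{3m}}^*$. Writing $t = 2^m$ and expanding, the difference is an affine map whose linear part is
$$
G_\epsilon(x) = \epsilon x^{t} + \epsilon^{t} x^{t^2} + \bigl(\epsilon^{t}+\epsilon^{t^2}+\epsilon\bigr)x,
$$
a $\F_2$-linearized polynomial of $\F_2$-degree $t^2$ over $\F_{2^{3m}}$. So the whole problem reduces to: $G_\epsilon$ is a bijection for all $\epsilon \ne 0$. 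I would apply Lemma~\ref{lem:criterion} with $q = 2^m$, $r = 3$, and coefficients $c_0 = \epsilon+\epsilon^t+\epsilon^{t^2}$, $c_1 = \epsilon$, $c_2 = \epsilon^t$. Note $c_0 = \Tr_3(\epsilon) = B_1 \in \F_{2^m}$, so $c_0^q = c_0$. The $3 \times 3$ determinant then becomes a polynomial expression in $\epsilon, \epsilon^t, \epsilon^{t^2}$, and it will be symmetric under the Frobenius $x \mapsto x^t$ (since permuting the three roots cyclically permutes the rows/columns and leaves the determinant fixed up to the obvious relabeling); hence it is a symmetric function of $x_1 = \epsilon$, $x_2 = \epsilon^t$, $x_3 = \epsilon^{t^2}$, so it lies in $\F_{2^m}$ and can be rewritten in the elementary symmetric functions $B_1, B_2, B_3$ (and possibly the auxiliary quantities $u_1, u_2$, or $\Tr_3(\epsilon^3)$, $\Tr_3(\epsilon^{1+2t})$ introduced just before the statement). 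The objective is to show this determinant is nonzero for every $\epsilon \ne 0$.

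**Key steps in order.** First, expand the determinant. For $\epsilon \in \F_{2^m}$ one has $x_1 = x_2 = x_3 = \epsilon$, so $c_0 = \epsilon$, $c_1 = c_2 = \epsilon$, and $G_\epsilon(x) = \epsilon(x + x^t + x^{t^2}) = \epsilon\,\Tr_3(x)$ viewed suitably — wait, more carefully, $G_\epsilon(x) = \epsilon\bigl(x^t + x^{t^2} + x\bigr)$, which is $\epsilon$ times the relative trace and is certainly not injective; but $\epsilon \in \F_{2^m}^*$ is excluded from neither the statement nor... actually it is not excluded, so I must double-check: for $\epsilon \in \F_{2^m}$ the original difference map is $x \mapsto f(x+\epsilon)+f(x)+\epsilon x$ and one should recompute directly rather than trust the generic formula. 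In fact when $\epsilon \in \F_{2^m}$, $f(x+\epsilon)+f(x) = \epsilon(x^t + x^{t^2}) + \epsilon^t x + \epsilon^{t^2} x + (\text{const}) = \epsilon(x^t+x^{t^2}+x) + (\text{const})$ using $\epsilon^t = \epsilon^{t^2} = \epsilon$, and adding $\epsilon x$ gives $\epsilon(x^t + x^{t^2}) + (\text{const})$, whose linear part $\epsilon(x^t+x^{t^2})$ has kernel $\F_{2^m}$ — so it is NOT a permutation. This means the case $\epsilon \in \F_{2^m}^*$ genuinely needs attention, and I suspect the intended reading restricts to $\epsilon \notin \F_{2^m}$ via a preliminary reduction, or that my expansion of $f(x+\epsilon)+f(x)$ is off by a sign-free term I should recheck. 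Resolving this subtlety is step zero. Assuming it is handled (e.g. the correct linear part for $\epsilon \in \F_{2^m}$ is $\epsilon(x^t + x^{t^2} + x)$ composed with something, or one shows separately it works), the main line is: for $\epsilon \notin \F_{2^m}$, reduce the determinant to a polynomial identity $\Phi(B_1, B_2, B_3) \ne 0$, or better, $\Phi(u_1, u_2, B_1, B_2, B_3) \ne 0$, using relations~(\ref{eqn:u_sum}) and (\ref{eqn:u_product}).

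**The hard part.** The crux is showing the reduced symmetric expression never vanishes, and this is where the congruence condition $m \not\equiv 2 \pmod 3$ (equivalently $3 \nmid m$ or $m \equiv 1 \pmod 3$) must enter — presumably through $\gcd(3, 2^m-1)$ or $\gcd(3, 2^{3m}-1)/\gcd(3,2^m-1)$, i.e. whether cube roots of unity lie in $\F_{2^m}$ or only in $\F_{2^{3m}}$. When $m \equiv 1 \pmod 3$, $\F_{2^m}$ contains the primitive cube roots of unity, which changes the factorization structure of the relevant polynomial; the referee-contributed argument flagged in the text handles exactly this case, so I expect the proof to split: $m \equiv 0 \pmod 3$ (handled by one method, perhaps a direct field-theoretic or norm argument since then $\F_{2^3} \subseteq \F_{2^m}$), and $m \equiv 1 \pmod 3$ (the referee's argument, likely exploiting that $x^2+x+1$ splits over $\F_{2^m}$ to factor the determinant and show each factor is a nonzero norm or trace). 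Concretely, after reduction I would try to show the determinant equals, up to a nonzero scalar, something of the form $N_3(\epsilon)^{a} \cdot \bigl(\text{expression in } \Tr_3(\epsilon^{2^m+2}), B_i\bigr)$ or factors as a product over the three Frobenius conjugates of a degree-one-ish form, and then argue nonvanishing because a sum of conjugates of a nonzero element can still vanish only in controlled ways — and it is precisely controlling those that requires $m \not\equiv 2 \pmod 3$. The routine part (expanding the $3\times 3$ determinant and collecting terms) I would not grind through here; the substantive obstacle is the final nonvanishing argument and its dependence on $m \bmod 3$.
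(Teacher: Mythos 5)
Your proposal is a plan rather than a proof, and it already goes wrong at step zero. The expansion of the linear part is incorrect: with $t=2^m$, from $(x+\epsilon)^{t+1}+x^{t+1}=\epsilon x^t+\epsilon^t x+\epsilon^{t+1}$ and $(x+\epsilon)^{t^2+t}+x^{t^2+t}=\epsilon^t x^{t^2}+\epsilon^{t^2}x^t+\epsilon^{t^2+t}$, the linear part of $f(x+\epsilon)+f(x)+\epsilon x$ is
\begin{equation*}
G_\epsilon(x)=(\epsilon+\epsilon^{t^2})x^{t}+\epsilon^{t}x^{t^2}+(\epsilon+\epsilon^{t})x,
\end{equation*}
not $\epsilon x^{t}+\epsilon^{t}x^{t^2}+(\epsilon+\epsilon^{t}+\epsilon^{t^2})x$: you have shifted the $\epsilon^{t^2}$ contribution from the $x^t$-coefficient to the $x$-coefficient. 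The ``subtlety'' you flag for $\epsilon\in\F_{2^m}^*$ is purely an artifact of this error (compounded by a second slip: $\epsilon^t x+\epsilon^{t^2}x=2\epsilon x=0$ there, not $\epsilon x$). With the correct coefficients, for $\epsilon\in\F_{2^m}^*$ the linear part collapses to $\epsilon x^{t^2}$, which is a bijection; correspondingly, in the paper's reduction the determinant condition becomes $M_\epsilon=\N_3(\epsilon)+\Tr_3(\epsilon^3+\epsilon^{1+2^{m+1}})\neq 0$, and for $\epsilon\in\F_{2^m}^*$ one has $\epsilon^{1+2^{m+1}}=\epsilon^3$, so $M_\epsilon=\N_3(\epsilon)\neq 0$. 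There is no preliminary reduction excluding $\F_{2^m}$; that case is simply fine.

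The second and larger gap is that the entire substance of the proof --- showing $M_\epsilon\neq 0$ for $\epsilon\notin\F_{2^m}$ --- is left as a guess about what ``should'' happen. The paper computes $M_\epsilon=B_1^3+B_1B_2+u_2$ and then argues by contradiction in two cases. If $B_1=0$, then $M_\epsilon=0$ forces $B_3=B_2^{3/2}$ via \eqref{eqn:u_product}, whence $\epsilon/B_2^{1/2}$ is a root of $y^3+y+1$ and lies in $\F_{2^3}$; if $3\mid m$ this puts $\epsilon\in\F_{2^m}$ (contradiction), and if $m\equiv 1\pmod 3$ one uses $2^m\equiv 2$, $2^{m+1}\equiv 4\pmod 7$ to show $\Tr_3(\epsilon^3)=\Tr_3(\epsilon^{1+2^{m+1}})$, so $M_\epsilon=\N_3(\epsilon)\neq 0$. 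If $B_1\neq 0$ (normalized to $1$), $M_\epsilon=0$ forces $C_\epsilon(x)=x^3+x^2+(w^2+w)x+(w^3+w^2+1)$ for some $w\in\F_{2^m}$, and the explicit element $\tau_2+\tau_1 w+1$ (with $\tau_i$ the roots of $x^3+x+1$) is exhibited as a root, leading to the same dichotomy. Note that the relevant arithmetic obstruction is membership of $\F_{2^3}$ in $\F_{2^m}$ and the action of $2^m$ modulo $7$ on $\F_8^*$, not cube roots of unity ($\F_{2^m}$ contains primitive cube roots of unity iff $m$ is even, which is not the dividing line here). Your high-level strategy --- Lemma~\ref{lem:criterion}, symmetrization in $B_1,B_2,B_3,u_1,u_2$, case split on $m\bmod 3$ --- does match the paper's, but without the corrected expansion and the explicit contradiction arguments there is no proof.
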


\begin{proof}
  A similar analysis as the proof of Proposition~\ref{prop:construction2}
  shows that $f$ is pseudo-planar \Iff
  $$\N_3(\epsilon)+\Tr_{3}(\epsilon^3+\epsilon^{1+2^{m+1}})\neq 0$$
  for every $\epsilon\in\F_{2^{3m}}^*$. For convenience, we write
  $M_{\epsilon}=\N_3(\epsilon)+\Tr_{3}(\epsilon^3+\epsilon^{1+2^{m+1}})$.

  First suppose $\epsilon\in\F_{2^m}^*$. Then
  $ M_{\epsilon}=\N_3(\epsilon)+\Tr_{3}(\epsilon^3+\epsilon^{3})=\N_3(\epsilon)\ne 0.$

  Now let $\epsilon\in \F_{2^{3m}}^*\backslash\F_{2^m}$.
It can be verified that
$$ M_{\epsilon}=B_1^3+B_1B_2+u_2.$$
We will split our consideration into two parts according to whether $B_1=0$ or not.

Suppose $B_1=0$. Then $M_{\epsilon}=u_2$.
Now if $M_{\epsilon}=0$, from $\eqref{eqn:u_product}$, we get $B_3=B_2^{3/2}$.
Therefore $B_2\neq 0,$ since otherwise $B_1=B_2=B_3=0$, which is impossible.
Replace $B_3=B_2^{3/2}$ into $C_{\epsilon}(x)$, we obtain
$$
\left(\frac{\epsilon}{B_2^{1/2}}\right)^3+\frac{\epsilon}{B_2^{1/2}}+1=0,
$$
which implies that
$$ \frac{\epsilon}{B_2^{1/2}}\in\F_{2^3}. $$
That is to say that $\epsilon=b\beta$ with
$\beta:=B_2^{1/2}\in\F_{2^m}^*$ and
$b:=\epsilon/{B_2^{1/2}}\in\F_{2^3}^*$. If $m\equiv 0\pmod 3$, then
$b\in\F_{2^3}^*\subseteq\F_{2^m}$, so $\epsilon\in\F_{2^m}$, which
is a contradiction. If $m\equiv 1\pmod 3$, we see that $2^m\equiv
2\pmod 7$ and $2^{m+1}\equiv 2^{2m} \equiv 4\pmod 7$. Then
\begin{align*}
    \Tr_3(\epsilon^3)=&\Tr_3( (b\beta)^3)=\beta^3\Tr_3(b^3),\\
    \Tr_3(\epsilon^{1+2^{m+1}})=&\Tr_3(b^{1+2^{m+1}}\beta^{1+2^{m+1}})=\beta^3\Tr_3(b^5)=\beta^3\Tr_3(b^3).
\end{align*}
Hence
\begin{align*}
  M_{\epsilon} =& \N_3(\epsilon)+\Tr_3((b\beta)^3+(b\beta)^{1+2^{m+1}})=\N_3(\epsilon)\ne 0
\end{align*}
which is a contradiction.

Next suppose $B_1 \neq 0$. Without loss of generality we let $B_1=1$.
Assume that $M_{\epsilon}=1+B_2+u_2=0$, then
$u_2=B_2+1$. Replace it in \eqref{eqn:u_sum} and $\eqref{eqn:u_product}$, we get $u_1=B_3+1$,
and
\begin{equation}\label{eqn:Bs}
B_2^3+B_3^2+B_2B_3+B_2+1=0.
\end{equation}
If $B_2=0$, then $B_3=1$, and
$$\epsilon^3+\epsilon^2+1=0.$$
Similarly as above, this finally leads to $M_{\epsilon}=\N_3(\epsilon)\ne0$, which contradicts the assumption that $M_{\epsilon}=0$.
If $B_2\neq0$, we write $w=(B_3+1)/B_2$. Then \eqref{eqn:Bs}
becomes $B_2=w^2+w$. Hence $B_3=B_2w+1=w^3+w^2+1$. We rewrite
$C_{\epsilon}(x)$ as
\begin{equation}
x^3+x^2+(w^2+w)x+(w^3+w^2+1)=0.
  \label{eqn:C}
\end{equation}
Let the three roots of the polynomial $x^3+x+1$ in $\F_{2^m}$ be $\tau_1$,
$\tau_2(=\tau_1^2)$, and $\tau_3(=\tau_1^{4})$. We compute that
\begin{align*}
&(\tau_2 +\tau_1w+1)^3+(\tau_2 +\tau_1w+1)^2+B_2(\tau_2 +\tau_1w+1)+B_3\\
=&(\tau_1^3+\tau_1+1)w^3+(\tau_2 \tau_1^2+\tau_2 +\tau_1)w^2+(\tau_2 ^2\tau_1+\tau_2 +\tau_1+1)w+\tau_2 ^3+\tau_2 +1\\
=&0.
\end{align*}
Therefore the element $\tau_2+\tau_1w+1$ is a root of $C_{\epsilon}(x)$.
If $m\equiv 0\pmod 3$, then $\tau_i\,(1\le i\le 3)\in\F_{2^3}\subseteq\F_{2^m}$ and hence
$\tau_2+\tau_1w+1\in\F_{2^m}$. This
contradicts the fact that $C_{\epsilon}(x)$ is irreducible over $\F_{2^m}$.
If $m\equiv 1\pmod 3$, we see that
\begin{align*}
  \Tr_3(\epsilon^3)
  =& \Tr_3((\tau_2 +\tau_1w+1)^3)\\
  =& (\tau_2 +\tau_1w+1)^3 + (\tau_2 +\tau_1w+1)^{3\cdot 2^m} + (\tau_2 +\tau_1w+1)^{3\cdot2^{2m}}\\
  =& (\tau_2 +\tau_1w+1)^3 +(\tau_3 +\tau_2 w+1)^3 + (\tau_1 +\tau_3 w+1)^3\\
  =& (\tau_1^3+\tau_2^3+\tau_3^3)w^3+(\tau_1^2\tau_2+\tau_2^2\tau_3+\tau_3^2\tau_1+\tau_1^2+\tau_2^2+\tau_3^2)w^2\\
  & + (\tau_1\tau_2^2+\tau_2\tau_3^2+\tau_3^2\tau_1^2+\tau_1+\tau_2+\tau_3)w
    +(\tau_1^3+\tau_2^3+\tau_3^3+\tau_1^2+\tau_2^2+\tau_3^2+\tau_1+\tau_2+\tau_3+1)\\
  =& w^3+w^2,\\
  \Tr_3(\epsilon^{1+2^{m+1}})
  =& \Tr_3((\tau_2 +\tau_1w+1)^{1+2^{m+1}})\\
  =& \Tr_3((\tau_2 +\tau_1w+1)(\tau_1 +\tau_3w^2+1))\\
  =& (\tau_1\tau_2+\tau_2\tau_3+\tau_3\tau_1)w^3 + (\tau_1\tau_2+\tau_2\tau_3+\tau_3\tau_1+\tau_1+\tau_2+\tau_3)w^2\\
   & + (\tau_1^2+\tau_2^2+\tau_3^2+\tau_1+\tau_2+\tau_3)w+(\tau_1\tau_2+\tau_2\tau_3+\tau_3\tau_1+1)\\
  =& w^3+w^2.
\end{align*}
Thus
\begin{align*}
  M_{\epsilon} = \N_3(\epsilon)+\Tr_{3}(\epsilon^3+\epsilon^{1+2^{m+1}})=\N_3(\epsilon)\ne 0,
\end{align*}
which is also a contradiction.
\end{proof}

\begin{remark}
    Let $m\equiv 2\pmod 3$. Suppose $\epsilon\in\F_{2^{3m}}$
    satisfying $\epsilon^3+\epsilon^2+1=0$. (It is not hard to show that such $\epsilon$ exists.)
    Then we can compute
  $M_{\epsilon}=N_3(\epsilon)+\Tr_3(\epsilon^3+\epsilon^{1+2^{m+1}})=\sum_{i=0}^{6}\epsilon^i=0$. Thus $f(x)=x^{2^m+1}+x^{2^{2m}+2^m}$
    is not pseudo-planar on $\F_{2^{3m}}$.
\end{remark}


\begin{proposition}
  Let $m$ be a positive integer and $m\not\equiv 1\pmod 3$. Then
  $$ f(x)=x^{2^{2m}+1}+x^{2^{2m}+2^m} $$
  is pseudo-planar on $\F_{2^{3m}}$.
  \label{prop:construction4}
\end{proposition}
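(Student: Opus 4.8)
The plan is to follow the proof of Proposition~\ref{prop:construction3} almost verbatim, with $u_2$ systematically replaced by $u_1$ and with the roles of the residue classes $m\equiv1\pmod3$ and $m\equiv2\pmod3$ interchanged. First I would mimic the opening of the proof of Proposition~\ref{prop:construction2}: setting $t=2^m$ and expanding, one finds that $f(x+\epsilon)+f(x)+\epsilon x$ equals a constant (in $x$) plus the linearized polynomial
$$ G_\epsilon(x)=(\epsilon+\epsilon^{2^m})x^{2^{2m}}+\epsilon^{2^{2m}}x^{2^m}+(\epsilon^{2^{2m}}+\epsilon)x, $$
so $f$ is pseudo-planar \Iff $G_\epsilon$ is a permutation of $\F_{2^{3m}}$ for every $\epsilon\in\F_{2^{3m}}^*$. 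Applying Lemma~\ref{lem:criterion} with $q=2^m$, $r=3$ and evaluating the resulting $3\times3$ determinant, I expect it to equal
$$ M'_\epsilon:=\N_3(\epsilon)+\Tr_3\bigl(\epsilon^3+\epsilon^{1+2^{2m+1}}\bigr). $$
Writing $C_\epsilon(x)=x^3+B_1x^2+B_2x+B_3$ and recalling that $u_1=\Tr_3(\epsilon^{1+2^{2m+1}})$, identities \eqref{eqn:u_sum}--\eqref{eqn:u_product} together with $\Tr_3(\epsilon^3)=B_1^3+B_3+B_1B_2$ give $M'_\epsilon=B_1^3+B_1B_2+u_1$, which is exactly the quantity $M_\epsilon$ of Proposition~\ref{prop:construction3} with $u_2$ replaced by $u_1$.

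With this reformulation in hand, I would run precisely the case analysis of Proposition~\ref{prop:construction3}. For $\epsilon\in\F_{2^m}^*$ one has $\epsilon^{2^{2m}}=\epsilon$, hence $M'_\epsilon=\N_3(\epsilon)\neq0$. For $\epsilon\in\F_{2^{3m}}^*\setminus\F_{2^m}$, split according to whether $B_1=0$. Since $u_1$ and $u_2$ enter \eqref{eqn:u_sum} and \eqref{eqn:u_product} symmetrically, the algebraic reductions are unchanged: if $B_1=0$ then $M'_\epsilon=u_1$, and $u_1=0$ forces $B_3^2=B_2^3$, which (as in the proof of Proposition~\ref{prop:construction3}) forces $\epsilon=b\beta$ with $\beta\in\F_{2^m}^*$ and $b\in\F_{2^3}^*$ a root of $x^3+x+1$; if $B_1\neq0$, normalize $B_1=1$, so that $M'_\epsilon=0$ gives $u_1=B_2+1$, then $u_2=B_3+1$ by \eqref{eqn:u_sum}, and finally the very same equation \eqref{eqn:Bs}, leading either to $\epsilon^3+\epsilon^2+1=0$ (when $B_2=0$) or, via $w=(B_3+1)/B_2\in\F_{2^m}$, to $C_\epsilon(x)=x^3+x^2+(w^2+w)x+(w^3+w^2+1)$ having $\tau_2+\tau_1w+1$ as a root, where $\tau_1,\tau_2=\tau_1^2,\tau_3=\tau_1^4$ are the roots of $x^3+x+1$.

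It remains to derive a contradiction in each of these configurations, and this is where the hypothesis on $m$ enters. If $m\equiv0\pmod3$ then $\F_{2^3}\subseteq\F_{2^m}$, and in every subcase this places $\epsilon$ in $\F_{2^m}$, contradicting $\epsilon\notin\F_{2^m}$ (or, in the last subcase, contradicting the irreducibility of $C_\epsilon$ over $\F_{2^m}$). If $m\equiv2\pmod3$ --- the case that now plays the role of $m\equiv1\pmod3$ in Proposition~\ref{prop:construction3} --- then $2^m\equiv4$, $2^{2m}\equiv2$, $2^{2m+1}\equiv4\pmod7$, and the Frobenius $\sigma^m$ acts on $\{\tau_1,\tau_2,\tau_3\}$ as the $3$-cycle $\tau_1\mapsto\tau_3\mapsto\tau_2\mapsto\tau_1$. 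Short computations entirely parallel to the displayed ones in the proof of Proposition~\ref{prop:construction3} then give $\Tr_3(\epsilon^3)=\Tr_3(\epsilon^{1+2^{2m+1}})$ in each subcase (when $\epsilon=b\beta$ this reads $\beta^3\Tr_3(b^3)=\beta^3\Tr_3(b^5)$, using $b^7=1$; when $\epsilon^3+\epsilon^2+1=0$ both sides equal $\epsilon^3+\epsilon^5+\epsilon^6$; when $\epsilon=\tau_2+\tau_1w+1$ both sides equal $w^3+w^2$), whence $M'_\epsilon=\N_3(\epsilon)\neq0$, a contradiction. (For $m\equiv1\pmod3$ one checks instead that an $\epsilon$ with $\epsilon^3+\epsilon^2+1=0$ yields $M'_\epsilon=\sum_{i=0}^{6}\epsilon^i=0$, so the hypothesis $m\not\equiv1\pmod3$ is genuinely needed.)

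The only step requiring real care is the $3\times3$ determinant computation: one must verify that it produces the cyclic sum $u_1=\Tr_3(\epsilon^{1+2^{2m+1}})$ rather than $u_2$, together with the precise exponent $2^{2m+1}$ --- this is the point where Proposition~\ref{prop:construction4} genuinely diverges from Proposition~\ref{prop:construction3}. After that, the secondary point to check is that for $m\equiv2\pmod3$ the Frobenius acts on the roots of $x^3+x+1$ by the \emph{inverse} $3$-cycle compared with the $m\equiv1$ case, which merely permutes the three summands occurring in the relevant traces and hence leaves their values ($w^3+w^2$) unchanged; every remaining step is a transcription of the proof of Proposition~\ref{prop:construction3}.
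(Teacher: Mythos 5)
Your proposal is correct and follows essentially the same route as the paper, which likewise reduces to the permutation criterion via Lemma~\ref{lem:criterion}, obtains the condition $\N_3(\epsilon)+\Tr_3(\epsilon^3+\epsilon^{2+2^m})\neq 0$, and then declares the remaining case analysis analogous to Proposition~\ref{prop:construction3}; note that your exponent $1+2^{2m+1}$ is the $2^m$-th power Galois conjugate of the paper's $2+2^m$, so both traces equal $u_1$ and the two criteria coincide. Your filled-in details --- the swap of the roles of $m\equiv 1$ and $m\equiv 2\pmod 3$, the symmetry of \eqref{eqn:u_sum}--\eqref{eqn:u_product} in $u_1,u_2$, and the inverse Frobenius action on the roots of $x^3+x+1$ leaving the relevant symmetric sums (hence the value $w^3+w^2$) unchanged --- all check out.
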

\begin{proof}
  A similar analysis to the proof of Proposition~\ref{prop:construction2}
  shows that $f$ is pseudo-planar \Iff
  $$\N_3(\epsilon)+\Tr_{3}(\epsilon^3+\epsilon^{2+2^{m}})\neq 0$$
  for every $\epsilon\in\F_{2^{3m}}^*$. The remaining discussion is analogous to
  Proposition~\ref{prop:construction3}.
\end{proof}

\section{Association schemes arising from pseudo-planar functions}\label{sec:assosche}
Let $R=\GR$ be a Galois ring. For any set $A$ whose elements belong to $R$ ($A$ may be a multiset), we identify $A$ and the group ring element $\sum_{g \in A} d_g g\in\ZZ[R]$ throughout this section, where $d_g$ is the multiplicity of $g \in A$.
It is well known that the Teichm\"{u}ller system $T$ is a $(2^n,2^n,2^n,1)$-RDS in $R$ with respect to $Z$, where
$$Z=\{2x\mid x \in R\}.$$
Bonnecaze and Duursma in \cite{BD} showed that $T$ gives rise to an association scheme. More specifically, when $n \ge 3$, we have four disjoint subsets
$$
\Om_0=\{0\},\
\Om_1=T^*,\
\Om_2=\{-x \mid x \in \Om_1\},\
\Om_3=Z \setminus \{0\},
$$
where $T^*:=T \setminus \{0\}$.
The rest elements of $R$ are divided into two classes. Let $\Om_4$ contain the remaining ones which appear in the multiset $T^2$ and let $\Om_5$ contain the remaining ones which do not. The partition $\{\Om_i \mid 0 \le i \le 5\}$ forms a Schur ring over $R$, which leads to a $5$-class association scheme.
For a pseudo-planar function $f$, the set
$$D_f=\{x+2\sqrt{f(x)} \mid x \in T\}$$
is also a $(2^n,2^n,2^n,1)$-RDS in $R$ with respect to $Z$ (see \cite{SZh2013}).
Consequently, it is natural to ask whether an association scheme can also be obtained from $D_f$ or not. In this section, we prove that any relative difference set $D_f$, which necessarily arises from a pseudo-planar function $f$, will produce an association scheme. In fact, the partition of $R$ is obtained in a similar way. At first, we have four subsets
$$
\Sc_0=\{0\},\
\Sc_1=D_f \setminus \{0\},\
\Sc_2=\{-x \mid x \in \Sc_1\}=\Sc_1^{(-1)},\
\Sc_3=Z \setminus \{0\}.
$$
Furthermore, the remaining elements of $R$ are divided into two classes. Let $\Sc_4$ contain the remaining ones which appear in the multiset $D_f^2$ and let $\Sc_5$ contain the remaining ones which do not.

Using the following lemma, it is straightforward to verify that $\{\Sc_i \mid 0 \le i \le 5\}$ indeed forms a partition of $R$.

\begin{lemma}[{\cite[Theorem 1]{BD}}]
\label{lemma1}
Let $R=\GR$ and $T$ be the Teichm\"{u}ller system.
\begin{enumerate}
\item The multiset $TT^{(-1)}$ contains $0$ with multiplicity $2^n$, no other elements of $Z$, and the elements outside $Z$ with multiplicity one.
\item The multiset $T^2$ contains the elements of $Z$ with multiplicity one, and half of the elements outside $Z$ with multiplicity two.
\end{enumerate}
\end{lemma}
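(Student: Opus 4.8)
The plan is to reduce both parts to a single decomposition identity for sums of Teichm\"uller elements, and then argue purely from the uniqueness of the representation $x=a+2b$ together with a count of ordered pairs. The key identity I would record first is that, for any $s,t\in T$,
$$ s+t=(s\oplus t)+2\sqrt{st}, $$
where $s\oplus t\in T$ is the field sum of $(T,\oplus,\cdot)$ and $\sqrt{st}\in T$. This is immediate from the defining relation $s\oplus t=s+t+2\sqrt{st}$ once one uses $-2w=2w$ for every $w\in R$ (since $4w=0$). Because $s\oplus t$ and $\sqrt{st}$ both lie in $T$, the right-hand side is exactly the unique expression of $s+t$ in the form $a+2b$. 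In particular, $s+t$ is a unit precisely when $s\oplus t\ne 0$, i.e. when $s\ne t$, while $s=t$ gives $s+t=2s\in Z$. I would also note the Teichm\"uller system maps bijectively onto $R/Z\cong(T,\oplus,\cdot)=\F_{2^n}$ under reduction modulo $Z$, so $x\in R$ is a unit iff its reduction is nonzero.

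For part (1), I would handle $TT^{(-1)}$, the multiset of differences $t_1-t_2$. The diagonal pairs $(t,t)$ contribute $0$ with multiplicity $2^n$. For $t_1\ne t_2$, reducing modulo $Z$ sends $t_1-t_2$ to $\bar t_1+\bar t_2\ne 0$ in $\F_{2^n}$, so $t_1-t_2$ is a unit; hence no element of $Z\setminus\{0\}$ occurs. It then remains to show each unit occurs exactly once, and since there are $2^n(2^n-1)=|R\setminus Z|$ ordered pairs with $t_1\ne t_2$, it suffices to prove the difference map is injective on them. If $t_1-t_2=t_3-t_4$, then $t_1+t_4=t_2+t_3$ in $R$; applying the identity to both sides and using uniqueness of the representation yields $t_1\oplus t_4=t_2\oplus t_3$ and $t_1t_4=t_2t_3$. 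Thus $\{t_1,t_4\}$ and $\{t_2,t_3\}$ are the root sets of one monic quadratic over $(T,\oplus,\cdot)$ and so coincide; since $t_1\ne t_2$, this forces $t_1=t_3$ and $t_2=t_4$, giving the desired bijection onto $R\setminus Z$.

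For part (2), I would treat $T^2$, the multiset of sums $t_1+t_2$, in the same spirit. The diagonal contributes $\{2t:t\in T\}$, and $t\mapsto 2t$ is a bijection of $T$ onto $Z$ (injective by the same reduction argument, hence bijective by cardinality), so each element of $Z$ is hit once by a diagonal pair; off-diagonal sums are units by the identity and so never land in $Z$, giving multiplicity exactly one on $Z$. For a unit $u=t_1+t_2$ with $t_1\ne t_2$, the identity and uniqueness show that $t_1\oplus t_2$ and $t_1t_2$ are determined by $u$, so the unordered pair $\{t_1,t_2\}$ is determined by $u$; hence $u$ arises from exactly one unordered pair, i.e. with multiplicity $2$. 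Counting, the $2^n(2^n-1)$ off-diagonal ordered pairs realise $2^n(2^n-1)/2$ distinct units, exactly half of $|R\setminus Z|=2^n(2^n-1)$; thus half the elements outside $Z$ appear with multiplicity two and the rest with multiplicity zero.

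The routine ingredients are the reduction map $R\to\F_{2^n}$ and the pair counts, which I expect to be straightforward. The one computation needing care is the decomposition identity itself, where the characteristic-four cancellation $-2w=2w$ is exactly what matches the sign in the definition of $\oplus$. The only genuinely structural step, used in both parts, is the passage from ``equal $\oplus$-sum and equal product'' to ``equal unordered pair''; I would justify this by viewing both pairs as the roots in $(T,\oplus,\cdot)\cong\F_{2^n}$ of the same monic quadratic, and I anticipate this is where the argument must be stated most carefully.
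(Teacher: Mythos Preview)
Your argument is correct. The key identity $s+t=(s\oplus t)+2\sqrt{st}$ together with the uniqueness of the $2$-adic representation in $R$ does exactly what is needed, and your reduction of both assertions to ``same $\oplus$-sum and same product implies same unordered pair'' in the field $(T,\oplus,\cdot)$ is clean and complete. The only point where a reader might pause is the injectivity step in part~(1): when you conclude $\{t_1,t_4\}=\{t_2,t_3\}$, you should also dispose of the degenerate possibility $t_1=t_4$ (which would force $t_2=t_3=t_1$ and hence $t_1-t_2=0$), but this is immediate.

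As for comparison with the paper: the paper does not give its own proof of this lemma at all---it is simply quoted as \cite[Theorem~1]{BD} and used as a black box. So your write-up supplies something the paper omits. The method you use (Teichm\"uller decomposition plus the quadratic-roots trick over $\F_{2^n}$) is the natural and standard one, and is essentially how the result is proved in the original reference.
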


Now we consider the dual partition of $\{\Sc_i \mid 0 \le i \le 5\}$ on the character group $\widehat{R}$.
According to \cite[Theorem 3]{SZh2013}, if $f$ is pseudo-planar then
$\X(D_f)$ takes six values when $\chi$ ranges over $\widehat{R}$.
More precisely,
$$
\chi_a(D_f)=\begin{cases}
                          2^n & \text{for $a=0$ },\\
                          0   & \text{for $a \in Z \setminus \{0\}$},\\
                          \pm2^{(n-1)/2}\pm2^{(n-1)/2}\mathbf{i} & \text{for $a \in R \setminus Z$},
            \end{cases}
$$
when $n$ is odd and
$$
\chi_a(D_f)=\begin{cases}
                          2^n & \text{for $a=0$ },\\
                          0   & \text{for $a \in Z \setminus \{0\}$},\\
                          \text{$\pm2^{n/2}$ or $\pm2^{n/2}\mathbf{i}$} & \text{for $a \in R \setminus Z$},
             \end{cases}
$$
when $n$ is even. Furthermore, it is natural to investigate the
frequencies of these six values when $\X$ ranges over $\widehat{R}$.
Similar to the case of almost perfect nonlinear functions, we
introduce the definition of Fourier spectrum of a pseudo-planar function
$f$ as follows.

\begin{definition}
The Fourier spectrum of a pseudo-planar function $f$ is defined to be the
multiset
$$
\{\X(D_f) \mid \X \in \widehat{R}\}.
$$
\end{definition}
As a consequence of Theorem \ref{as} below, we can show that the
Fourier spectrum is the same for every pseudo-planar function.

Note that $\X(\Sc_1)=\X(D_f)-1$. There is a natural partition
$\{\E_i \mid 0 \le i \le 5\}$ on the character group $\widehat{R}$,
where $\chi_a$ and $\chi_b$ are in the same class if and only if
$\chi_a(\Sc_1)=\chi_b(\Sc_1)$. The partition $\{\E_i  \mid 0 \le i
\le 5\}$ is given as follows:
\begin{equation}\label{oddPartition}
    \begin{array}{lll}
\E_0&=&\{\X_0\},\\
\E_1&=&\{\X \in \widehat{R} \mid \X(\Sc_1)=-1\}=\{\X_a  \mid a \in Z \setminus \{0\}\},\\
\E_2&=&\{\X \in \widehat{R} \mid \X(\Sc_1)=-1+2^{(n-1)/2}+2^{(n-1)/2}\mathbf{i}\},\\
\E_3&=&\{\X \in \widehat{R} \mid \X(\Sc_1)=-1+2^{(n-1)/2}-2^{(n-1)/2}\mathbf{i}\},\\
\E_4&=&\{\X \in \widehat{R} \mid \X(\Sc_1)=-1-2^{(n-1)/2}+2^{(n-1)/2}\mathbf{i}\},\\
\E_5&=&\{\X \in \widehat{R} \mid
\X(\Sc_1)=-1-2^{(n-1)/2}-2^{(n-1)/2}\mathbf{i}\},
\end{array}
\end{equation}
when $n$ is odd and
\begin{equation}\label{evenPartition}
    \begin{array}{lll}
\E_0&=&\{\X_0\},\\
\E_1&=&\{\X \in \widehat{R} \mid \X(\Sc_1)=-1\}=\{\X_a  \mid a \in Z \setminus \{0\}\},\\
\E_2&=&\{\X \in \widehat{R} \mid \X(\Sc_1)=-1+2^{n/2}\},\\
\E_3&=&\{\X \in \widehat{R} \mid \X(\Sc_1)=-1-2^{n/2}\},\\
\E_4&=&\{\X \in \widehat{R} \mid \X(\Sc_1)=-1+2^{n/2}\mathbf{i}\},\\
\E_5&=&\{\X \in \widehat{R} \mid \X(\Sc_1)=-1-2^{n/2}\mathbf{i}\},
\end{array}
\end{equation}
when $n$ is even.

In the following we show that $(R,\{\Sc_i\}_{i=0}^5)$ is a Schur
ring, whose dual is $(\widehat{R},\{\E_i\}_{i=0}^5)$. We first use
Lemma~\ref{lemma2} and Lemma~\ref{lemma3} to prove that $\Sc_4$ can
be expressed as a linear combination of $\Sc_1^2,\Sc_2,$ and
$\Sc_3$. Then the values of $\chi(\Sc_4)$ and $\chi(\Sc_5)$ can be
determined where $\chi$ ranges over $\widehat{R}$. Combining this
with Bannai-Muzychuk criterion, the result follows.


\begin{lemma}
\label{lemma2}
Let $R=\GR$, and $f$ be a pseudo-planar function over $\F_{2^n}$ which can be identified with a map from
$T$ to $T$. Let $D_f=\{x+2\sqrt{f(x)} \mid x \in T\}$ and $\Sc_1=D_f \setminus \{0\}$.
\begin{enumerate}
\item The multiset $\Sc_1\Sc_1^{(-1)}$ consists of $0$ with multiplicity $2^n-1$ and the elements of $\Sc_4 \cup \Sc_5$ with multiplicity one.
\item The multiset $\Sc_1^2$ contains the elements of $\Sc_3$ with multiplicity one. In $\Sc_1^2$, the multiplicity of an element outside $\Sc_3$ is either zero or two.
\end{enumerate}
\end{lemma}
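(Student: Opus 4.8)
The plan is to read off part (1) from the relative difference set identity satisfied by $D_f$, and part (2) from the structure of the sumset $D_f^2$; the one substantial point is a pseudo-planarity argument buried in part (2). Throughout I assume $f(0)=0$ (harmless: replacing $f$ by $f+f(0)$ leaves pseudo-planarity unchanged and only translates $D_f$ by an element of $Z$), so that $0\in D_f$, $|\Sc_1|=2^n-1$, and $D_f=\Sc_1+\Sc_0$ in $\ZZ[R]$, with $\Sc_0=\{0\}$ the group-ring unit. For part (1): $D_f$ being a $(2^n,2^n,2^n,1)$-RDS with forbidden subgroup $Z$ means $D_fD_f^{(-1)}=2^n\Sc_0+(R-Z)$. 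Expanding $D_fD_f^{(-1)}=(\Sc_1+\Sc_0)(\Sc_1^{(-1)}+\Sc_0)=\Sc_1\Sc_1^{(-1)}+\Sc_1+\Sc_2+\Sc_0$ (using $\Sc_1^{(-1)}=\Sc_2$) and comparing, $\Sc_1\Sc_1^{(-1)}=(2^n-1)\Sc_0+\big((R-Z)-\Sc_1-\Sc_2\big)$. Since $\{\Sc_i\}_{i=0}^5$ partitions $R$ with $\Sc_0\cup\Sc_3=Z$, the complement $R\setminus Z$ is $\Sc_1\cup\Sc_2\cup\Sc_4\cup\Sc_5$, so $(R-Z)-\Sc_1-\Sc_2=\Sc_4+\Sc_5$, which is the assertion.

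For part (2), I would reduce to the sumset $D_f^2$: from $\Sc_1=D_f-\Sc_0$ we get $\Sc_1^2=D_f^2-2D_f+\Sc_0$. To analyze $D_f^2=\sum_{x,y\in T}\big((x+2\sqrt{f(x)})+(y+2\sqrt{f(y)})\big)$, observe that a diagonal pair contributes $(x+2\sqrt{f(x)})+(x+2\sqrt{f(x)})=2x$, and $x\mapsto 2x$ is a bijection $T\to Z$, so the diagonal pairs contribute each element of $Z$ exactly once. For $x\ne y$ the sum lies outside $Z$ (reduce mod $2R$), and using $a+b=(a\oplus b)+2\sqrt{ab}$ in $R$ for $a,b\in T$ (immediate from the definition of $\oplus$) together with the additivity mod $2R$ of the square-root map, the sum has Teichmüller decomposition
$$ (x+2\sqrt{f(x)})+(y+2\sqrt{f(y)})=(x\oplus y)+2\sqrt{x\cdot y\oplus f(x)\oplus f(y)}, $$
where $\oplus,\cdot$ and the square root are now the operations of the field $(T,\oplus,\cdot)$. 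This value is symmetric in $x,y$; if the induced map on unordered pairs $\{x,y\}$ ($x\ne y$) is injective, then $D_f^2=Z+2C$ for some $C\subseteq R\setminus Z$, whence $\Sc_1^2=(Z+2C)-2(\Sc_1+\Sc_0)+\Sc_0=\Sc_3+2C-2\Sc_1$. Reading off coefficients — using $\Sc_3\subseteq Z$ while $C,\Sc_1\subseteq R\setminus Z$, and that $\Sc_1^2$ is a genuine multiset, so the coefficient $2([g\in C]-[g\in\Sc_1])$ at any $g\notin\Sc_3$ is nonnegative and at most $2$ — gives exactly the two statements of part (2).

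The crux, and the step I expect to be the main obstacle, is the injectivity of $\{x,y\}\mapsto(x\oplus y)+2\sqrt{x\cdot y\oplus f(x)\oplus f(y)}$; this is where pseudo-planarity is used. Suppose $\{x,y\}$ and $\{x',y'\}$, with $x\ne y$, $x'\ne y'$, give the same value. Comparing Teichmüller parts gives $\epsilon:=x\oplus y=x'\oplus y'\ne 0$, and comparing the other coordinate (the square root being a bijection) gives $x\cdot y\oplus f(x)\oplus f(y)=x'\cdot y'\oplus f(x')\oplus f(y')$. Write $y=x\oplus\epsilon$, $y'=x'\oplus\epsilon$, and $g_\epsilon(a):=f(a\oplus\epsilon)\oplus f(a)\oplus\epsilon\cdot a$, a permutation of $T$ by pseudo-planarity. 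Since $x\cdot y=x^2\oplus\epsilon\cdot x$, the identity becomes $x^2\oplus g_\epsilon(x)=(x')^2\oplus g_\epsilon(x')$, i.e. $u^2=g_\epsilon(x)\oplus g_\epsilon(x')$ with $u:=x\oplus x'$. If $u=0$ then $x=x'$, $y=y'$; so assume $u\ne 0$. I would then use the two identities
$$ g_\epsilon(x)\oplus g_\epsilon(x\oplus u)=g_u(x)\oplus g_u(x\oplus\epsilon)\qquad\text{and}\qquad g_u(x)\oplus g_u(x\oplus u)=u^2, $$
both of which hold formally in characteristic two by expanding the definition of $g_{(\cdot)}$ (the first records the symmetry of the second difference $f(x\oplus u\oplus\epsilon)\oplus f(x\oplus u)\oplus f(x\oplus\epsilon)\oplus f(x)$ in $u$ and $\epsilon$). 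Combining them with $u^2=g_\epsilon(x)\oplus g_\epsilon(x')=g_\epsilon(x)\oplus g_\epsilon(x\oplus u)$ yields $g_u(x\oplus\epsilon)=g_u(x\oplus u)$; since $g_u$ is a permutation ($u\ne 0$) this forces $\epsilon=u$, hence $x'=y$ and $y'=x$, so $\{x',y'\}=\{x,y\}$. Everything outside this characteristic-two "second difference" bookkeeping is routine group-ring algebra.
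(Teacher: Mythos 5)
Your proof is correct. Part (1) is identical to the paper's argument (expand $D_fD_f^{(-1)}=2^n\Sc_0+(R-Z)$ and subtract), and your explicit normalization $f(0)=0$ makes precise an assumption the paper leaves implicit. For part (2) you take a genuinely different route. The paper first shows, via the uniqueness of the representation $a+2b$, that only diagonal pairs can produce elements of $\Sc_3$ (so $\Sc_1^2=\Sc_3+2U_f$ with $U_f=\sum_{g\notin\Sc_3}d_gg$), and then pins down $d_g\in\{0,1\}$ by a pure counting argument: it computes $[\Sc_1^2(\Sc_1^{(-1)})^2]_0$ in two ways --- once as $[((2^n-1)\Sc_0+\Sc_4+\Sc_5)^2]_0$ using part (1), once from $(\Sc_3+2U_f)(\Sc_3+2U_f^{(-1)})$ --- to get $\sum d_g=\sum d_g^2=(2^n-1)(2^{n-1}-1)$. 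You instead prove outright that the unordered-pair sum map $\{x,y\}\mapsto(\x)+(\y)=(x\oplus y)+2\sqrt{x\cdot y\oplus f(x)\oplus f(y)}$ is injective, using the symmetry of the second difference of $f$ in $u$ and $\epsilon$ together with the identity $g_u(x)\oplus g_u(x\oplus u)=u^2$ and the permutation property of $g_u$; I checked these identities and the subsequent group-ring bookkeeping $\Sc_1^2=\Sc_3+2C-2\Sc_1$, and they hold. The paper's argument is shorter and leans on the RDS identity already established in part (1), but it only counts and never identifies which elements occur; your argument is more elementary (no squaring of group-ring elements, no coefficient extraction), shows precisely where pseudo-planarity enters, and yields the stronger structural fact that $D_f^2=Z+2C$ for an honest set $C$ of cardinality $\binom{2^n}{2}$, from which the multiplicity statement is immediate.
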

\begin{proof}
\begin{enumerate}
\item Since $f$ is pseudo-planar, the set $D_f$ is an RDS with $D_fD_f^{(-1)}=2^n \Sc_0+(R-Z)$. It is easy to verify that $\Sc_1\Sc_1^{(-1)}=(2^n-1)\Sc_0+(R-Z-\Sc_1-\Sc_2)=(2^n-1)\Sc_0+\Sc_4+\Sc_5$.
\item For any $x,y,z \in T^*$, suppose $\x+\y=2z$. Then $\x=y+2(\sqrt{f(y)}\oplus z \oplus y)$. By the unique representation (\ref{eqn}), we must have $x=y=z$. Hence $\Sc_1^2$ contains the elements of $\Sc_3$ with multiplicity one. Suppose $\Sc_1^2=\Sc_3+2U_f$, where $U_f=\sum_{g \in R \setminus \Sc_3} d_g g$, it suffices to show that $d_g=0$ or $1$. Since $\Sc_1^2=\Sc_3+2U_f$, applying the principal character, we have
    \begin{equation}
    \label{eqn1}
    \sum_{g \in R \setminus \Sc_3} d_g=(2^n-1)(2^{n-1}-1).
    \end{equation}
    Now, we consider the coefficient of $0$ in $\Sc_1^2(\Sc_1^{(-1)})^2$. On one hand, $\Sc_1^2(\Sc_1^{(-1)})^2=(\Sc_1\Sc_1^{(-1)})^2=((2^n-1)\Sc_0+\Sc_4+\Sc_5)^2=(2^n-1)^2\Sc_0+2(2^n-1)(\Sc_4+\Sc_5)+(\Sc_4+\Sc_5)^2$. 
    Since $\Sc_4+\Sc_5=\Sc_4^{(-1)}+\Sc_5^{(-1)}$ and $|\Sc_4 \cup \Sc_5|=(2^n-1)(2^n-2)$, we have $[(\Sc_4+\Sc_5)^2]_0=(2^n-1)(2^n-2)$. Consequently,
    $[\Sc_1^2(\Sc_1^{(-1)})^2]_0=(2^n-1)(2^{n+1}-3)$. On the other hand, $\Sc_1^2(\Sc_1^{(-1)})^2=(\Sc_3+2U_f)(\Sc_3+2U_f^{(-1)})=\Sc_3^2+2\Sc_3U_f+2\Sc_3U_f^{(-1)}+4U_fU_f^{(-1)}$. It is easy to check that $[\Sc_1^2(\Sc_1^{(-1)})^2]_0=2^n-1+4\sum_{g \in R \setminus \Sc_3}d_g^2$. Therefore, we have
    \begin{equation}
    \label{eqn2}
    \sum_{g \in R \setminus \Sc_3} d_g^2=(2^n-1)(2^{n-1}-1).
    \end{equation}
    By Equations~(\ref{eqn1})-(\ref{eqn2}), we have
    $$
    \sum_{g \in R \setminus \Sc_3} d_g=\sum_{g \in R \setminus \Sc_3} d_g^2,
    $$
    which implies that $d_g=0$ or $1$.
\end{enumerate}
\end{proof}

Now we proceed to determine $U_f$ mentioned in the proof of Lemma~\ref{lemma2}.

\begin{lemma}
\label{lemma3}
Let $R=\GR$ and $f$ be a pseudo-planar function over $\F_{2^n}$.
Let $\Sc_i,0\le i\le 5$ be defined as above.
Then we have
\begin{enumerate}
\item $\Sc_1^2=\Sc_3+2\Sc_4$ when $n$ is odd;
\item $\Sc_1^2=\Sc_3+2\Sc_2+2\Sc_4$ when $n$ is even.
\end{enumerate}
\end{lemma}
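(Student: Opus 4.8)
The plan is to compute the character values $\chi(\Sc_1^2)$ for every $\chi\in\widehat R$ and then invoke the inversion formula (Lemma~\ref{inversion formula}) to pin down $\Sc_1^2$ exactly. By Lemma~\ref{lemma2}(2) we already know $\Sc_1^2=\Sc_3+2U_f$ with $U_f$ a genuine set (each $d_g\in\{0,1\}$) supported on $R\setminus\Sc_3$; moreover $\Sc_4\cup\Sc_5$ is precisely $R\setminus(Z\cup\Sc_1\cup\Sc_2)$, so it suffices to determine, for each element $g$ outside $\Sc_3$, whether $g\in\mathrm{supp}(U_f)$. Equivalently, writing $W_f:=U_f$, I want to show $W_f=\Sc_4$ when $n$ is odd and $W_f=\Sc_2+\Sc_4$ when $n$ is even, i.e.\ $2W_f=\Sc_1^2-\Sc_3$ in $\ZZ[R]$.

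First I would evaluate $\chi_a(\Sc_1^2)=\chi_a(\Sc_1)^2=(\chi_a(D_f)-1)^2$ using the explicit list of values of $\chi_a(D_f)$ recalled just before the definition of the Fourier spectrum (from \cite[Theorem 3]{SZh2013}): namely $\chi_a(D_f)=2^n$ for $a=0$, $=0$ for $a\in Z\setminus\{0\}$, and $=\pm2^{(n-1)/2}\pm2^{(n-1)/2}\mathbf i$ (resp.\ $\pm2^{n/2}$ or $\pm2^{n/2}\mathbf i$) on the units, according to the parity of $n$. Squaring these gives a short finite list of possible values for $\chi_a(\Sc_1^2)$. Separately I would compute $\chi_a(\Sc_3)$: since $Z$ is a subgroup of order $2^n$, $\chi_a(Z)=2^n$ if $a\in Z$ and $0$ otherwise, whence $\chi_a(\Sc_3)=\chi_a(Z)-1$ equals $2^n-1$ for $a\in Z$ and $-1$ for $a\notin Z$. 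Subtracting and dividing by $2$ then yields a closed formula for $\chi_a(W_f)=\tfrac12(\chi_a(\Sc_1^2)-\chi_a(\Sc_3))$ in every case. The point is that this formula should match, term by term, the character value of the candidate set: for $n$ odd one checks $\chi_a(W_f)=\chi_a(\Sc_4)$, and for $n$ even $\chi_a(W_f)=\chi_a(\Sc_2)+\chi_a(\Sc_4)$. For the latter I need the character values of $\Sc_2=\Sc_1^{(-1)}$, which are simply the complex conjugates $\overline{\chi_a(\Sc_1)}=\chi_{-a}(\Sc_1)$, and of $\Sc_4$, which I can access as $\Sc_4=\tfrac12(\Sc_1\Sc_1^{(-1)}-(2^n-1)\Sc_0)-\Sc_5$ together with the eventual consistency of the whole computation; alternatively, and more cleanly, I would just verify the claimed identity directly by checking that both sides have equal $\chi_a$-value for all $a$, using Lemma~\ref{inversion formula} to conclude equality in $\ZZ[R]$.

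The cleanest route, which I would actually follow, avoids computing $\chi(\Sc_4)$ independently: set $V:=\Sc_3+2\Sc_4$ (odd case) or $V:=\Sc_3+2\Sc_2+2\Sc_4$ (even case); I want $\Sc_1^2=V$. Since both $\Sc_1^2$ and $V$ lie in $\ZZ[R]$, by the inversion formula it suffices to show $\chi_a(\Sc_1^2)=\chi_a(V)$ for all $a\in R$. For $a=0$ both sides equal $(2^n-1)^2$, a direct count. For $a\in Z\setminus\{0\}$, $\chi_a(\Sc_1)=-1$ so the left side is $1$; on the right, $\chi_a(\Sc_3)=2^n-1$ and $\chi_a(\Sc_2)+\chi_a(\Sc_4)$ can be computed from the known splitting of $R$ into the $\Sc_i$ (note $\Sc_0+\Sc_1+\Sc_2+\Sc_3+\Sc_4+\Sc_5=R$, so $\chi_a$ of the sum is $0$, and $\Sc_4,\Sc_5$ are permuted by negation so their $\chi_a$-values are real). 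For $a\in R\setminus Z$ the real and imaginary parts of $\chi_a(D_f)$ are $\pm2^{(n-1)/2}$ (odd $n$) or one coordinate is $0$ and the other $\pm2^{n/2}$ (even $n$); squaring $\chi_a(\Sc_1)=\chi_a(D_f)-1$ and matching against $\chi_a(V)$ is then a bounded case check. The main obstacle, and the step requiring care, is the case $a\in R\setminus Z$ with $n$ even: there $\chi_a(\Sc_1^2)$ takes several values ($(\pm2^{n/2}-1)^2$ versus $(\pm2^{n/2}\mathbf i-1)^2$), and I must confirm that the partition $\widehat R=\bigsqcup\E_i$ in \eqref{evenPartition} is calibrated so that these match $\chi_a(\Sc_3)+2\chi_a(\Sc_2)+2\chi_a(\Sc_4)$ class by class — in particular that the "$+2\Sc_2$" term is exactly what repairs the discrepancy between the odd and even cases. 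I expect this to come down to the identity $\chi_a(\Sc_1)+\chi_a(\Sc_2)=\chi_a(\Sc_1)+\overline{\chi_a(\Sc_1)}=2\,\mathrm{Re}\,\chi_a(\Sc_1)$ interacting with the fact that, for even $n$, $\chi_a(\Sc_1)$ is either real or purely imaginary, so that $\chi_a(\Sc_1)^2+2\chi_a(\Sc_2)$ collapses to a manageable real quantity; once that is unwound, equality of all character values follows and Lemma~\ref{inversion formula} finishes the proof.
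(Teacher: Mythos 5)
There is a genuine gap: your verification requires the individual values $\chi_a(\Sc_4)$ (equivalently $\chi_a(\Sc_5)$), and these are not computable independently of the statement being proved. What is known a priori is only the sum $\chi_a(\Sc_4)+\chi_a(\Sc_5)=|\chi_a(\Sc_1)|^2-(2^n-1)$, coming from $\Sc_1\Sc_1^{(-1)}=(2^n-1)\Sc_0+\Sc_4+\Sc_5$ (your formula $\Sc_4=\tfrac12(\Sc_1\Sc_1^{(-1)}-(2^n-1)\Sc_0)-\Sc_5$ also carries a spurious factor $\tfrac12$, and still involves the equally unknown $\Sc_5$). Writing $\Sc_1^2=\Sc_3+2U_f$ as in Lemma~\ref{lemma2}, the set $\Sc_4$ is by definition $U_f\setminus(\Sc_0\cup\Sc_1\cup\Sc_2)$, so the lemma is exactly the assertion that $U_f$ misses $\Sc_0\cup\Sc_1$ and either misses $\Sc_2$ (odd $n$) or contains all of $\Sc_2$ (even $n$). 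Substituting $\chi_a(\Sc_4)=\tfrac12\left(\chi_a(\Sc_1)^2-\chi_a(\Sc_3)\right)-\chi_a\left(U_f\cap(\Sc_0\cup\Sc_1\cup\Sc_2)\right)$ into your target identity makes the proposed term-by-term comparison collapse to a tautology unless you already control these intersections; determining $\chi_a(\Sc_4)$ is equivalent to the lemma itself (it is what later produces the eigenmatrix in Theorem~\ref{as}), so the ``bounded case check'' cannot be carried out as described.

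The missing idea is to extract the intersection numbers from quantities whose character values \emph{are} computable. The paper evaluates $[D_f^2D_f^{(-1)}]_0=\frac{1}{|R|}\sum_{a}|\chi_a(D_f)|^2\chi_a(D_f)$ and $[D_f^3]_0=\frac{1}{|R|}\sum_{a}\chi_a(D_f)^3$ via Lemma~\ref{inversion formula}, using the known value distribution of $\chi_a(D_f)$ (for this only the differences $m_2-m_3$ and $m_4-m_5$ of the class sizes are needed, and these follow from $\sum_a\chi_a(D_f)=2^{2n}$). Expanding $D_f=\Sc_0+\Sc_1$ then gives $[\Sc_1^2\Sc_1^{(-1)}]_0=0$, i.e.\ $U_f\cap\Sc_1=\emptyset$, and, in the even case, $[\Sc_1^2\Sc_2^{(-1)}]_0=2^{n+1}-2=2|\Sc_2|$, which together with the multiplicity bound from Lemma~\ref{lemma2}(2) forces every element of $\Sc_2$ to occur in $\Sc_1^2$ with multiplicity exactly two; the fact that $0\notin\Sc_1^2$ follows directly from the unique representation $x=a+2b$ in $T+2T$. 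These coefficient-of-zero computations are the step your proposal must add before the inversion formula can close the argument.
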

\begin{proof}
    We only present the proof for Assertion 2, because a similar method can be applied to Assertion 1. The partition $\{\E_i  \mid 0 \le i \le 5\}$ is given in \eqref{evenPartition}.
Define $m_i=|\E_i|$ for $0 \le i \le 5$, then $m_0=1$ and $m_1=2^n-1$. As a preparation, we first consider the relations between $m_2$, $m_3$, $m_4$ and $m_5$. A straightforward computation shows that $\sum_{a \in R} \X_a(D_f)=2^{2n}$. On the other hand,
\begin{eqnarray*}
\sum_{a \in R} \X_a(D_f) &=& m_0\cdot2^n+m_1\cdot0+m_2\cdot2^{n/2}+m_3\cdot(-2^{n/2})+m_4\cdot2^{n/2}\mathbf{i}+m_5\cdot(-2^{n/2}\mathbf{i})\\
                       &=& 2^n+2^{n/2}(m_2-m_3)+2^{n/2}(m_4-m_5)\mathbf{i}.
\end{eqnarray*}
Consequently, we have
\begin{align*}
m_2-m_3&=2^{3n/2}-2^{n/2},\\
m_4-m_5&=0.
\end{align*}
By Lemma \ref{lemma2}, $\Sc_1^2=\Sc_3+2U_f$. For any $x,y \in T$, if $\x+\y=0$, then $x=y+2(\sqrt{f(x)}\oplus\sqrt{f(y)}\oplus y)$. The latter equation implies $x=y=0$. Hence, $0$ is not an element of $\Sc_1^2$, i.e., $U_f \cap \Sc_0 = \emptyset$. By definition, we see that $\Sc_4 \subset U_f$ and $\Sc_5 \cap U_f = \emptyset$. It remains to determine the relationship between $\Sc_1$, $\Sc_2$ and $U_f$.

Firstly, we consider $\Sc_1$. By the inversion formula,
\begin{eqnarray*}
[D_f^2D_f^{(-1)}]_0 &=& \frac{1}{|R|}\sum_{a \in R} \X_a(D_f^2D_f^{(-1)})\\
          &=& \frac{1}{|R|}\sum_{a \in R} |\X_a(D_f)|^2 \X_a(D_f)\\
          &=& \frac{1}{|R|}(2^{3n}+2^{3n/2}(m_2-m_3)+2^{3n/2}(m_4-m_5)\mathbf{i})\\
          &=& 2^{n+1}-1.
\end{eqnarray*}
Note that $$D_f^2D_f^{(-1)}=\Sc_1^2\Sc_1^{(-1)}+2\Sc_1\Sc_1^{(-1)}+\Sc_1^2+2\Sc_1+\Sc_2+\Sc_0,$$ $[\Sc_1\Sc_1^{(-1)}]_0=2^n-1$ and $[\Sc_0]_0=1$. It follows that $[\Sc_1^2\Sc_1^{(-1)}]_0$=0. Hence, $\Sc_1^2$ contains no element of $\Sc_1$, namely, $\Sc_1 \cap U_f = \emptyset$.

Secondly, we consider $\Sc_2$. By the inversion formula,
\begin{eqnarray*}
[D_f^3]_0 &=& \frac{1}{|R|}\sum_{a \in R} \X_a(D_f)^3\\
          &=& \frac{1}{|R|}(2^{3n}+2^{3n/2}(m_2-m_3)-2^{3n/2}(m_4-m_5)\mathbf{i})\\
          &=& 2^{n+1}-1.
\end{eqnarray*}
From $$D_f^3=(\Sc_0+\Sc_1)^3=\Sc_0+3\Sc_1+3\Sc_1^2+\Sc_1^3,$$ $[\Sc_0]_0=1$, and $[\Sc_1]_0=[\Sc_1^2]_0=0$, it follows that $[\Sc_1^2\Sc_2^{(-1)}]_0=[\Sc_1^3]_0=2^{n+1}-2$. By Lemma \ref{lemma2}, $\Sc_1^2$ contains each element of $\Sc_2$ with multiplicity at most two. On the other hand, we have $[\Sc_1^2\Sc_2^{(-1)}]_0=2|\Sc_2|$.
Hence, each element of $\Sc_2$ appears in $\Sc_1^2$ with multiplicity exactly two. Therefore, when $n$ is even, we have $\Sc_1^2=\Sc_3+2\Sc_2+2\Sc_4$.
\end{proof}

The partition $\{\Sc_i \mid 0 \le i \le 5\}$ of $R$ induces a partition $\{\R_i \mid 0 \le i \le 5\}$ of $R \times R$, where
$$
\R_i=\{(x,y) \in R \times R \mid x-y \in \Sc_i\} \quad (0 \le i \le 5).
$$
Now we are ready to prove that $(R, \{\R_i\}_{i=0}^5)$
indeed forms an association scheme.

\begin{theorem}
\label{as}
Let $R=\GR$ and $\Sc_i,0\le i\le 5$ be defined as above. Then $(R,\{\Sc_i\}_{i=0}^5)$ is a Schur ring, whose dual is $(\widehat{R},\{\E_i\}_{i=0}^5)$. If $n \ge 3$, then $(R, \{\R_i\}_{i=0}^5)$ forms a $5$-class association scheme, whose first eigenmatrix is given as follows.
When $n$ is odd, suppose $b=2^{(n-1)/2}$, we have
\begin{equation}
\label{eqn3}
P=\left[ \begin {array}{cccccc} 1&2\,{b}^{2}-1&2\,{b}^{2}-1&2\,{b}^{2}-
1&2\,{b}^{4}-3\,{b}^{2}+1&2\,{b}^{4}-3\,{b}^{2}+1\\ \noalign{\medskip}
1&-1&-1&2\,{b}^{2}-1&-{b}^{2}+1&-{b}^{2}+1\\ \noalign{\medskip}1&-1+b+
b\mathbf{i}&-1+b-b\mathbf{i}&-1& \left( 1-b \right)  \left( 1-b\mathbf{i} \right) & \left( 1-b
 \right)  \left( 1+b\mathbf{i} \right) \\ \noalign{\medskip}1&-1+b-b\mathbf{i}&-1+b+b\mathbf{i}&-
1& \left( 1-b \right)  \left( 1+b\mathbf{i} \right) & \left( 1-b \right)
 \left( 1-b\mathbf{i} \right) \\ \noalign{\medskip}1&-1-b+b\mathbf{i}&-1-b-b\mathbf{i}&-1&
 \left( 1+b \right)  \left( 1-b\mathbf{i} \right) & \left( 1+b \right)  \left(
1+b\mathbf{i} \right) \\ \noalign{\medskip}1&-1-b-b\mathbf{i}&-1-b+b\mathbf{i}&-1& \left( 1+b
 \right)  \left( 1+b\mathbf{i} \right) & \left( 1+b \right)  \left( 1-b\mathbf{i}
 \right) \end {array} \right].
\end{equation}
When $n$ is even, suppose $b=2^{(n-2)/2}$, we have
\begin{equation}
\label{eqn4}
P=\left[ \begin {array}{cccccc} 1&4\,{b}^{2}-1&4\,{b}^{2}-1&4\,{b}^{2}-
1&8\,{b}^{4}-10\,{b}^{2}+2&8\,{b}^{4}-2\,{b}^{2}\\ \noalign{\medskip}1
&-1&-1&4\,{b}^{2}-1&-2\,{b}^{2}+2&-2\,{b}^{2}\\ \noalign{\medskip}1&2
\,b-1&2\,b-1&-1&2\,{b}^{2}-4\,b+2&-2\,{b}^{2}\\ \noalign{\medskip}1&-2
\,b-1&-2\,b-1&-1&2\,{b}^{2}+4\,b+2&-2\,{b}^{2}\\ \noalign{\medskip}1&-
1+2\,b\mathbf{i}&-1-2\,b\mathbf{i}&-1&-2\,{b}^{2}+2&2\,{b}^{2}\\ \noalign{\medskip}1&-1-
2\,b\mathbf{i}&-1+2\,b\mathbf{i}&-1&-2\,{b}^{2}+2&2\,{b}^{2}\end {array} \right].
\end{equation}
The second eigenmatrix is listed in Appendix.
\end{theorem}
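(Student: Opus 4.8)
The plan is to verify Theorem~\ref{as} by combining the structural facts already assembled in Lemmas~\ref{lemma1}--\ref{lemma3} with the Bannai--Muzychuk criterion. First I would prove the \emph{Schur ring} assertion. The partition $\{\Sc_i\}_{i=0}^5$ is, by construction, invariant under negation (since $\Sc_2=\Sc_1^{(-1)}$, $\Sc_4,\Sc_5$ are unions of orbits under $x\mapsto -x$), so it suffices to show each product $\Sc_i\Sc_j$ lies in the $\ZZ$-span of the $\Sc_k$'s. The key input is Lemma~\ref{lemma3}, which writes $\Sc_1^2$ as an explicit $\ZZ$-combination of $\Sc_3,\Sc_2,\Sc_4$; together with $\Sc_1\Sc_1^{(-1)}=(2^n-1)\Sc_0+\Sc_4+\Sc_5$ from Lemma~\ref{lemma2}(1) and the known RDS identity $D_fD_f^{(-1)}=2^n\Sc_0+(R-Z)$, one gets all products involving $\Sc_1,\Sc_2$. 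Since $\Sc_3=Z\setminus\{0\}$ is a subgroup-coset-type set, $\Sc_3^2$ and $\Sc_3\Sc_i$ are easily handled directly in $\ZZ[R]$, and $\Sc_4,\Sc_5$ are then pinned down as $\Sc_4+\Sc_5=R-Z-\Sc_1-\Sc_2$ and (via Lemma~\ref{lemma3}) $\Sc_4=\frac12(\Sc_1^2-\Sc_3)$ minus the $\Sc_2$-part when $n$ is even; the remaining products follow by taking $(-1)$-images and using associativity. An equivalent and cleaner route is to pass to the dual side: show directly that $\{\E_i\}$ is the character-equivalence partition, note $|\{\chi(\Sc_i)\}|\le 6$ by \cite[Theorem 3]{SZh2013}, and invoke the standard fact that a negation-invariant partition whose dual partition has the same number of classes is automatically a Schur ring.

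Second, I would compute the common character values $\chi(\Sc_i)$ on each class $\E_j$, which is exactly what produces the eigenmatrix $P$. The values of $\chi(\Sc_0)=1$, $\chi(\Sc_1)$ (read off from the defining list \eqref{oddPartition}/\eqref{evenPartition}), $\chi(\Sc_2)=\overline{\chi(\Sc_1)}$, and $\chi(\Sc_3)=\chi(Z)-1$ (which is $2^{n-1}-1$ on $\E_0$, equals $-1$ on the principal-on-$Z$ characters, and $2^{n-1}-1$ vs.\ $-1$ according to whether $a\in Z$ — in fact $\chi_a(Z)=2^{n-1}$ iff $a\in Z$) are immediate. The nontrivial entries are $\chi(\Sc_4)$ and $\chi(\Sc_5)$; here I would use Lemma~\ref{lemma3} to write $\chi(\Sc_4)=\tfrac12(\chi(\Sc_1)^2-\chi(\Sc_3))$ in the odd case and $\chi(\Sc_4)=\tfrac12(\chi(\Sc_1)^2-\chi(\Sc_3))-\chi(\Sc_2)$ in the even case, and then $\chi(\Sc_5)=\chi(R)-\chi(Z)-\chi(\Sc_1)-\chi(\Sc_2)-\chi(\Sc_4)=-\chi(\Sc_1)-\chi(\Sc_2)-\chi(\Sc_4)$ off the principal character. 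Substituting the six possible values of $\chi(\Sc_1)$ listed in \eqref{oddPartition}/\eqref{evenPartition} and simplifying yields precisely the rows of \eqref{eqn3} and \eqref{eqn4}; this is the bulk of the routine computation and I would present it compactly (e.g.\ one representative row) rather than in full.

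Third, I would establish that $(R,\{\R_i\}_{i=0}^5)$ is a genuine $5$-class association scheme. Having shown $\{\Sc_i\}$ is a Schur ring, the induced relation partition $\{\R_i\}$ automatically satisfies the closure $A_iA_j=\sum_k p_{ij}^k A_k$ with nonnegative integer structure constants, so it only remains to check the defining axioms (i)--(ii) for an association scheme: axiom (i) holds since $\Sc_0=\{0\}$, and axiom (ii) holds because the partition is negation-invariant, with $\R_1'=\R_2$, $\R_0'=\R_0$, $\R_3'=\R_3$, $\R_4'=\R_4$, $\R_5'=\R_5$ (the last three being symmetric). The condition $n\ge 3$ is needed to guarantee $\Om_4$-type/$\Sc_5$-type sets are nonempty and the six classes are genuinely distinct, as already used in Lemma~\ref{lemma1} and the setup; for $n\ge 3$ one also checks the listed $P$ is invertible (its determinant is a nonzero power of $2$ times a product of the nonzero Gauss-sum-type factors), confirming it is the first eigenmatrix of a $6$-dimensional scheme. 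The dual statement — that $(\widehat R,\{\E_i\})$ is the dual Schur ring — follows since $\{\E_i\}$ was defined as the character-equivalence partition and $P$ is invertible, so by the general theory recalled in Section~\ref{sec:preliminary} the second eigenmatrix $Q$ is the first eigenmatrix of $(\widehat R,\{\E_i\})$.

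I expect the main obstacle to be the even-$n$ bookkeeping in Step~2: in that case $\Sc_1^2$ acquires the extra $2\Sc_2$ term (Lemma~\ref{lemma3}(2)), which makes $\chi(\Sc_4)$ and $\chi(\Sc_5)$ depend on $\chi(\Sc_2)$ as well, and one must carefully track which of the four values $-1\pm 2^{n/2}$, $-1\pm 2^{n/2}\mathbf i$ of $\chi(\Sc_1)$ is conjugate to which, so that $\chi(\Sc_1)+\chi(\Sc_2)$ comes out as $-2\pm 2^{n/1}\cdot(\text{real or }0)$ correctly; getting the signs in the $2b-1$, $-2b-1$, $\pm 2b\mathbf i$ entries of \eqref{eqn4} right is the delicate part. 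Everything else — the Schur-ring closure, the axiom check, and the invertibility of $P$ — is structural and short once Lemmas~\ref{lemma2} and~\ref{lemma3} are in hand.
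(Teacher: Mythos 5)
Your proposal is correct and follows essentially the same route as the paper's own (much terser) proof: apply the Bannai--Muzychuk criterion to the thin scheme on $(R,+)$ by checking that $\X(\Sc_i)$ is constant on each dual class $\E_j$ --- directly for $i\le 3$, via Lemma~\ref{lemma3} for $\Sc_4$, and by complementation for $\Sc_5$. One small slip to fix: $\chi_a(Z)=2^{n}$ (not $2^{n-1}$) precisely when $a\in Z$, so $\chi(\Sc_3)=2^n-1$ on $\E_0\cup\E_1$ and $-1$ elsewhere, consistent with the entries $2b^2-1$ (resp.\ $4b^2-1$) in \eqref{eqn3} and \eqref{eqn4}.
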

\begin{proof}
According to the Bannai-Muzychuk criterion, it suffices to prove that $\X_j(\Sc_i)$ is a constant for any $\X_j \in \E_j$, where $0 \le i,j \le 5$. This is trivially true for any $0 \le j \le 5$ and $0 \le i \le 3$, which can be verified by direct computations. By Lemma \ref{lemma3}, we can obtain $\X_j(\Sc_4)$ for any $0 \le j \le 5$. Then we get the values of $\X_j(\Sc_5)$. The information of $\X_j(\Sc_4)$ and $\X_j(\Sc_5)$ completes the proof.
\end{proof}

\begin{remark}
\item[(1)] When $n=1$, we have $\Sc_4=\Sc_5=\emptyset$. Then $(R, \{\R_i\}_{i=0}^5)$ is a $3$-class association scheme. When $n=2$, we get $\Sc_4=\emptyset$. Then $(R, \{\R_i\}_{i=0}^5)$ forms a $4$-class association scheme, whose first eigenmatrix can be easily determined as a submatrix of (\ref{eqn3}) or (\ref{eqn4}).
\item[(2)] The $5$-class association scheme investigated in \cite{BD} can be regarded as a special case of our construction where the pseudo-planar function $f=0$.
\end{remark}

\begin{corollary}\label{FourierSpectrum}
Suppose $f$ is a pseudo-planar function over $\F_{2^n}$. Then the Fourier spectrum $\{\X(D_f) \mid \chi \in \widehat{R}\}$ is that listed in Tables \ref{table1} or  \ref{table2}.
\end{corollary}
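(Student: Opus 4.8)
The plan is to observe that the Fourier spectrum is completely encoded by the sizes $m_i:=|\E_i|$ of the dual classes, and then to evaluate these six integers. By \cite[Theorem~3]{SZh2013} (recalled just before the definition of the Fourier spectrum), $\X_a(D_f)$ is constant on each $\E_i$: it equals $2^n$ on $\E_0=\{\X_0\}$, equals $0$ on $\E_1=\{\X_a\mid a\in Z\setminus\{0\}\}$, and on $\E_2,\E_3,\E_4,\E_5$ it takes the four remaining values recorded in \eqref{oddPartition} when $n$ is odd and in \eqref{evenPartition} when $n$ is even. Hence, as a multiset, $\{\X(D_f)\mid\X\in\widehat R\}$ consists of $2^n$ with multiplicity $m_0=1$, of $0$ with multiplicity $m_1=2^n-1$, and of each of the four off-values with multiplicity $m_2,m_3,m_4,m_5$ respectively, so it remains only to determine these last four numbers.

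The quickest route is to invoke Theorem~\ref{as}. It asserts that $(\widehat R,\{\E_i\}_{i=0}^5)$ is the dual Schur ring of $(R,\{\Sc_i\}_{i=0}^5)$, hence a $5$-class association scheme whose first eigenmatrix is the second eigenmatrix $Q=2^{2n}P^{-1}$ of $(R,\{\R_i\}_{i=0}^5)$, with $P$ given by \eqref{eqn3} or \eqref{eqn4} and $Q$ listed in the Appendix. The frequencies $m_i$ are exactly the valencies of this dual scheme, so $m_i=Q_{0i}$ and they can simply be read off from the zeroth row of $Q$. Substituting these $m_i$ together with the six values above into the multiset description yields Table~\ref{table1} when $n$ is odd and Table~\ref{table2} when $n$ is even.

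Alternatively, and without computing $Q$, one can pin down $m_2,\dots,m_5$ by a handful of low-order power moments. Using $\X_a(D_f^k)=\X_a(D_f)^k$ and the inversion formula (Lemma~\ref{inversion formula}),
\[
\sum_{a\in R}\X_a(D_f)=|R|\,[D_f]_0=2^{2n},\qquad \sum_{a\in R}\X_a(D_f)^2=|R|\,[D_f^2]_0=2^{2n},
\]
where $[D_f]_0=1$ and $[D_f^2]_0=1$ follow from $D_f=\Sc_0+\Sc_1$ and the fact, established in the proof of Lemma~\ref{lemma3}, that $\Sc_1^2$ contains $0$ with multiplicity zero. Combining this with $\sum_{i=0}^5 m_i=|R|=2^{2n}$ and separating real and imaginary parts gives, for even $n$, the relations $m_4-m_5=0$, $m_2-m_3=2^{3n/2}-2^{n/2}$, $m_2+m_3=m_4+m_5$, and for odd $n$, the relations $m_2=m_3$, $m_4=m_5$, $m_2-m_4=2^{(3n-1)/2}-2^{(n-1)/2}$. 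In each case the four multiplicities are determined uniquely and agree with Table~\ref{table1}/\ref{table2}.

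The argument is entirely routine; the only care needed is the parallel handling of the two parity cases with a consistent labelling of the classes $\E_i$, and a check that each of the six values genuinely occurs, i.e.\ that every computed $m_i$ is positive, which holds for $n\ge 3$ (for $n=1,2$ some classes collapse, as noted in the Remark after Theorem~\ref{as}). Thus there is no real obstacle: the content of the corollary is precisely the explicit evaluation of the $m_i$ sketched above.
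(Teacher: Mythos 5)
Your primary argument---identifying the frequencies with the class sizes $m_i=|\E_i|$ and reading them off the first row of the second eigenmatrix $Q$ in the Appendix via Theorem~\ref{as}---is exactly the paper's proof. Your alternative moment computation (first and second power moments of $\X_a(D_f)$ plus the total count, which I checked reproduces the entries of Tables~\ref{table1} and~\ref{table2}) is a correct self-contained bonus, but not needed.
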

\begin{proof}
Note that the frequency of each value can be obtained from the cardinality of the set $|\E_i|$. According to the second eigenmatrices listed in Appendix, the result now follows.
\end{proof}

\begin{table}
\begin{center}
\caption{Fourier spectrum, $n$ odd, $b=2^{(n-1)/2}$}
\label{table1}
\begin{tabular}{|c|c|}

\hline
Value  &  Frequency \\ \hline
$2b^2$  &  $1$ \\ \hline
$0$     &  $2b^2-1$ \\ \hline
$b+b\mathbf{i}$ & $\frac{b(2b^3+2b^2-b-1)}{2}$ \\ \hline
$b-b\mathbf{i}$ & $\frac{b(2b^3+2b^2-b-1)}{2}$ \\ \hline
$-b+b\mathbf{i}$ & $\frac{b(2b^3-2b^2-b+1)}{2}$ \\ \hline
$-b-b\mathbf{i}$ & $\frac{b(2b^3-2b^2-b+1)}{2}$ \\ \hline
\end{tabular}
\end{center}
\end{table}

\begin{table}
\begin{center}
\caption{Fourier spectrum, $n$ even, $b=2^{(n-2)/2}$}
\label{table2}
\begin{tabular}{|c|c|}
\hline
Value  &  Frequency \\ \hline
$4b^2$  &  $1$ \\ \hline
$0$     &  $4b^2-1$ \\ \hline
$2b$ & $b(4b^3+4b^2-b-1)$ \\ \hline
$-2b$ & $b(4b^3-4b^2-b+1)$ \\ \hline
$2b\mathbf{i}$ & $b^2(4b^2-1)$ \\ \hline
$-2b\mathbf{i}$ & $b^2(4b^2-1)$ \\ \hline
\end{tabular}
\end{center}
\end{table}

\section{Concluding remarks}\label{sec:concluding}

In this paper, three new classes of pseudo-planar binomial functions are provided.
In addition, we present a class of association schemes derived from
pseudo-planar functions, which can be considered as a natural generalization of
the one studied in \cite{BD}.

Let $D_1, D_2\subset G$ be two $(2^n,2^n,2^n,1)$ relative difference sets. They are
{\em equivalent} if there exist some $\alpha\in\Aut(G)$ and $a\in G$ such
that $\alpha(D_1)=D_2+a$.
Suppose $f$ is a function from $\F_{2^n}$ to itself.
It is proved in \cite{SZh2013} that $D_f$ is a $(2^n,2^n,2^n,1)$-RDS in $R=\GR$ with respect to $Z$ if and only if $f$ is pseudo-planar. So we say that two pseudo-planar functions $f_1$ and $f_2$ are {\em equivalent} if the relative difference
sets $D_{f_1}$ and $D_{f_2}$ are equivalent. By Corollary~\ref{FourierSpectrum}, the $p$-ranks and Smith
normal forms of the relative difference set $D_f$ associated with pseudo-planar functions are all the same.
Therefore some other techniques are to be developed to solve the equivalence problem.
The equivalence problem of pseudo-planar functions will be investigated in a manuscript prepared by Yue Zhou.

The following are several open problems.

\begin{enumerate}
\item All pseudo-planar binomials constructed in this paper are of type
      $$f(x)=ax^{2^i+2^j}+bx^{2^k+2^l}, $$
      where $i\ne j, k\ne l$, and $\{i,j\}\ne \{k,l\}$.
      For $n \le 9$, an exhaustive computer search shows that these pseudo-planar
      binomials can only exist on the finite field of the form
      $\F_{2^n}=\F_{2^{3m}}$. Therefore, it is interesting to examine that
      whether these pseudo-planar binomials can only exist in $\F_{2^n}$ with
      $3|n$ or not.

    \item The necessary and sufficient condition we provided in
      Proposition~\ref{prop:construction2}
      is not easily handled. It is desirable if one can derive a simpler
      characterization.
\end{enumerate}

\section*{Acknowledgements}
The authors express their gratitude to the anonymous
reviewers for their detailed and constructive comments which
are very helpful to the improvement of this paper, and to Prof. Claude Carlet, the Associate Editor, for his excellent editorial
job. We would like to thank Professor Qing Xiang and the anonymous reviewer for suggestions on the proofs of
Proposition~\ref{prop:construction3} and Proposition~\ref{prop:construction4},
and Dr. Yue Zhou for valuable comments and suggestions.
S. Hu was supported by the Scholarship Award for Excellent Doctoral Student
granted by Ministry of Education. T. Feng was supported in part by Fundamental Research Fund for the Central Universities of China, Zhejiang Provincial Natural Science Foundation under Grant No.~
LQ12A01019, in part by the National Natural Science Foundation of China
under Grant No.~11201418, and in part by the Research Fund for Doctoral Programs
from the Ministry of Education of China under Grant No.~20120101120089.
G. Ge was supported by the National Natural Science Foundation of China under
Grant No.~61171198 and Zhejiang Provincial Natural
Science Foundation of China under Grant No.~LZ13A010001.
\section*{Appendix}\label{sec:appendix}

When $n$ is odd, the second eigenmatrix of the association scheme is
\begin{small}
\begin{equation*}
Q=\left[ \begin{array}{cccccc} 1&2\,{b}^{2}-1&\frac{b}{2} \left( 2\,{b}^{3}+2\,{b}^{2}-b-1 \right) &\frac{b}{2} \left( 2\,{b}^{3}+2\,{b}^{2}-b-1 \right) & \frac{b}{2} \left( 2\,{b}^{3}-2\,{b}^{2}-b+1 \right) &\frac{b}{2} \left( 2\,{b}^{3}-2\,{b}^{2}-b+1 \right) \\
\noalign{\medskip}1&-1&\frac{b}{2} \left( {b}^{2}-1-({b}^{2}+b)\mathbf{i} \right) &\frac{b}{2} \left( {b}^{2}-1+({b}^{2}+b)\mathbf{i} \right) & \frac{b}{2} \left( 1-{b}^{2}-({b}^{2}-b)\mathbf{i} \right) &\frac{b}{2} \left( 1-{b}^{2}+({b}^{2}-b)\mathbf{i} \right) \\
\noalign{\medskip}1&-1&\frac{b}{2} \left( {b}^{2}-1+({b}^{2}+b)\mathbf{i} \right) &\frac{b}{2} \left( {b}^{2}-1-({b}^{2}+b)\mathbf{i} \right) & \frac{b}{2} \left( 1-{b}^{2}+({b}^{2}-b)\mathbf{i} \right) &\frac{b}{2} \left( 1-{b}^{2}-({b}^{2}-b)\mathbf{i} \right) \\
\noalign{\medskip}1&2\,{b}^{2}-1&-\frac{b}{2} \left( 1+b \right) &-\frac{b}{2} \left( 1+b \right) & \frac{b}{2} \left( 1-b \right) &\frac{b}{2} \left( 1-b \right) \\
\noalign{\medskip}1&-1&-\frac{b}{2} \left( 1+b\mathbf{i} \right) &\frac{b}{2} \left( -1+b\mathbf{i} \right) & \frac{b}{2} \left( 1+b\mathbf{i} \right) &\frac{b}{2} \left( 1-b\mathbf{i} \right) \\
\noalign{\medskip}1&-1&\frac{b}{2} \left( -1+b\mathbf{i} \right) &-\frac{b}{2} \left( 1+b\mathbf{i} \right) & \frac{b}{2} \left( 1-b\mathbf{i} \right) &{\frac {b \left( {b}^{2}+1 \right) }{2(1-b\mathbf{i})}} \end{array} \right].
\end{equation*}
\end{small}
When $n$ is even, the second eigenmatrix of the association scheme is
\begin{equation*}
Q=\left[ \begin {array}{cccccc} 1&4\,{b}^{2}-1&b \left( 4\,{b}^{3}-b+4\,{b}^{2}-1 \right) &b \left( 4\,{b}^{3}-4\,{b}^{2}-b+1 \right) & {b}^{2} \left( 4\,{b}^{2}-1 \right) &{b}^{2} \left( 4\,{b}^{2}-1 \right) \\
\noalign{\medskip}1&-1&b \left( b+2\,{b}^{2}-1 \right) &- \left( 2\,{b}^{2}-b-1 \right) b & -{b}^{2} \left( 1+2\,b\mathbf{i} \right) &{b}^{2} \left( -1+2\,b\mathbf{i} \right) \\
\noalign{\medskip}1&-1&b \left( b+2\,{b}^{2}-1 \right) &- \left( 2\,{b}^{2}-b-1 \right) b & {b}^{2} \left( -1+2\,b\mathbf{i} \right) &-{b}^{2} \left( 1+2\,b\mathbf{i} \right) \\
\noalign{\medskip}1&4\,{b}^{2}-1&-b \left( 1+b \right) &-b \left( -1+b \right) & -{b}^{2}&-{b}^{2}\\
\noalign{\medskip}1&-1&b \left( -1+b \right) &b \left( 1+b\right) & -{b}^{2}&-{b}^{2}\\
\noalign{\medskip}1&-1&-b \left( 1+b\right) &-b \left( -1+b \right) & {b}^{2}&{b}^{2}\end {array} \right].
\end{equation*}

\bibliographystyle{plain}
\bibliography{Ref}

\end{document}